\newtheorem{theorem}{Theorem}
\newtheorem{remark}{Remark}
\journal{ } 
\begin{document}
\bibliographystyle{elsarticle-num}
\newcommand{\modN}{\ (\mbox{mod}\ N)}

\begin{frontmatter}



\title{Phase models and clustering in networks of oscillators with delayed coupling}


\author[label1]{Sue Ann Campbell}
\author[label2]{Zhen Wang}
\address[label1]{Department of Applied Mathematics and Centre for Theoretical Neuroscience, University of Waterloo
   Waterloo ON N2L 3G1 Canada (e-mail: sacampbell@uwaterloo.ca).}
\address[label2]{Department of Applied Mathematics, University of Waterloo
   Waterloo ON N2L 3G1 Canada (e-mail: z377wang@uwaterloo.ca).}

\begin{abstract}
We consider a general model for a network of oscillators with 
time delayed, circulant coupling. We use the theory of weakly
coupled oscillators to reduce the system of delay differential 
equations to a phase model where the time delay enters as a phase 
shift. We use the phase model to study the existence and stability
of cluster solutions. Cluster solutions are phase locked
solutions where the oscillators separate into groups. Oscillators
within a group are synchronized while those in different groups
are phase-locked. We give model independent existence and stability
results for symmetric cluster solutions. We show that the presence
of the time delay can lead to the coexistence of multiple stable
clustering solutions. We apply our analytical results to a 
network of Morris Lecar neurons and compare these results with
numerical continuation and simulation studies.
\end{abstract}

\begin{keyword}
Time delay \sep neural network \sep oscillators \sep clustering solutions \sep stability



\end{keyword}

\end{frontmatter}


\section{Introduction}
\label{Introduction}
Coupled oscillator models have been used to study many biological
and physical systems, for example neural networks \citep{HMM93,KE88},
laser arrays \citep{WWa,WWb}, flashing of fireflies \citep{MS}, and
movement of a slime mold \citep{TFE}.  A basic question explored with
such models is whether the elements in the system will {\bf phase-lock}, i.e.,
oscillate with some fixed phase difference, and how the
physical parameters affect the answer to this question.
Clustering is a type of phase locking behavior where the oscillators in a network separate into
groups. Each group consists of fully synchronized oscillators, and different groups are
phase-locked with nonzero phase difference. Symmetric clustering refers to the situation 
when all the groups
are the same size while non-symmetric clustering means the groups have different sizes.

A phase model represents each oscillator with a single
variable as shown in Figure~\ref{phasedef}.
\begin{figure}
\centerline{\includegraphics[width=4in]{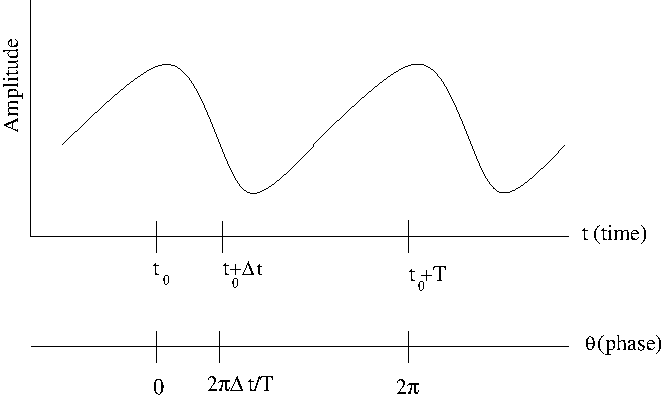}}
\caption{Defining the phase of an oscillator.}
\label{phasedef}
\end{figure}
A differential equation for each phase variable indicates how the phase
of the oscillator changes in time:
\[ \frac{d\theta_i}{dt}=\Omega_i+H_i(\theta_1,\theta_2,\ldots,\theta_N) \]
Here $\Omega_i$ is the intrinsic frequency of the $i^{th}$ oscillator and
the functions $H_i$ described how the coupling between oscillators influences
the phases.  Phase models have been used to study the behaviour of networks of
coupled oscillators beginning with the work of \cite{K84}.
Phase models are sometimes {\em posed} as models for coupled oscillators
\cite{MS,K84,Okuda93,Li03}. When the coupling between oscillators is sufficiently
weak, however, a phase model representation of a system can be {\em derived}
from a higher dimensional differential equation model, such one obtained from
a physical or biological description of the system \cite{ET10,HI97,KE02,SL12}. 
The low dimensional phase
model can then be used to predict behaviour in original high dimensional
physical model. This approach
has proved useful in studying synchronization properties of
many different neural models
\cite{HMM93,BC99a,Crook97,Erm96,galan2009phase,HMM95,MLPR07,ZS09}.
Phase models can be linked to experimentally derived phase
resetting curves \cite{ET10,SL12}, thus this approach has also been used to
make predictions about synchronization properties of experimental
preparations \cite{MLPR07}.

Okuda \cite{Okuda93} was the first to use phase models to study clustering behaviour.
Considering a phase model for a network of arbitrary size with all-to-all coupling,
Okuda \cite{Okuda93}
established general criteria for the stability of all possible symmetric cluster solutions as well
as some non-symmetric cluster solutions. He showed that these results gave a good prediction
of stability for a variety of model networks. Recently, similar results have been obtained for networks
with nearest-neighbour coupling \citep{MRTWBC}. Phase model analysis has been extensively used to study phase-locking in pairs of model and experimental neurons \citep{KE02,SNS06,MLPR07}. More recently it has been used to study clustering in larger
neural networks \cite{KE11,galan2006}.

In many systems there are time delays in the connections between the oscillators due to the time for a signal to propagate
from  one element to the other. In neural networks this delay is attributed
to the conduction of electrical activity along an axon or a dendrite
\cite{Crook97,KE02}.
Much work has been devoted to the study of the effect of time delays in
neural networks. However, the majority of this work has focussed on
systems where the neurons are excitable not oscillatory,
(e.g., \cite{BT,BT2,BGV,DHPS09,PRHS13,SHHD09}), the networks have only a few neurons
(e.g., \cite{Li03,CK12,KE02,LWM03,SW}) or focussed exclusively on synchronization (e.g., \cite{Crook97,Luz,Orosz12,Orosz14,PRHS13}).  Extensive work has been done
on networks Stuart-Landau oscillators with delayed diffusive coupling
(e.g., \cite{choe2010,CDHS10,DLS12} where the model for the individual oscillators
is the normal form for a Hopf bifurcation and thus the system is often
amenable to theoretical analysis.  Numerical approaches to study the
stability of cluster solutions in delayed neural oscillator networks
have also been developed \cite{Orosz14,OroszSIADS14}.
We note that there is a vast literature on time delays in artificial neural networks which we do not attempt to cite here.

Initial studies of phase models for systems with delayed coupling
considered models where the delay occurs in the argument of
the phases \cite{SW,Luz,KPR,NSK,Strog99}.
However, it has been shown \cite{KE02,Erm94,Izhikevich98} that for small
enough time delays it is more appropriate to include the
time delay as phase shift in the argument of the coupling function.
Crook et al. \cite{Crook97} use this type of model to study a continuum of
cortical oscillators with spatially decaying coupling and axonal delay.
Bressloff and Coombes \cite{BC99a,BC99b} study phase locking in chains and rings of pulse
coupled neurons with distributed delays and show
that distributed delays result in phase models with a distribution
of phase shifts. They consider phase models derived from integrate
and fire neurons and the Kuramoto phase model.

In this paper, we use phase models to investigate the effect of time delayed
coupling on the clustering behavior of oscillator networks. The plan for
our article as as follows. In the next section we will review how a general
network model with delayed coupling may be reduced to a phase model.  In section
3 we give conditions for existence and stability of symmetric
cluster solutions in a network with a circulant coupling matrix. In section
4 we
consider a particular application: a network of Morris-Lecar oscillators. We
derive the particular
phase model for this system and compare the predictions of the phase model theory to
numerical continuation and simulation studies. In section 5 we consider networks
where the connection matrix is no longer circulant.  In section 6 we discuss our work.

\section{Reduction to a phase model}
\label{section reduction to a phase model}

In this section, we review how to reduce a general model for a network of all-to-all coupled oscillators with time-delayed connections to a phase model. We begin by considering
our model for a single oscillator. This is a $n$-dimensional system of ordinary
differential equations
\begin{eqnarray}\label{PM ODE}
\frac{dX}{dt}=F(X(t)),
\end{eqnarray}
which admits an exponentially asymptotically stable periodic orbit, denoted by $\hat{X}(t)$, with period $T$. Linearizing the model (\ref{PM ODE}) about the periodic solution $\hat{X}(t)$ we obtain
\begin{eqnarray}\label{PM ODE linear}
\frac{dX}{dt}=DF(\hat{X}(t))X,
\end{eqnarray}
and its adjoint system
\begin{eqnarray}\label{PM ODE linear adjoint}
\frac{dZ}{dt}=-[DF(\hat{X}(t))]^TZ.
\end{eqnarray}
Here $DF(\hat{X}(t))$ represents the Jacobian matrix of F with respect to $X$, evaluated
on the periodic orbit $\hat{X}(t)$. Denote by $Z=\hat{Z}(t)$ the unique periodic solution of the adjoint system (\ref{PM ODE linear adjoint}) satisfying the normalization condition:
\begin{eqnarray*}
\frac{1}{T}\int_{0}^T\hat{Z}(t)\cdot F(\hat{X}(t))dt =1.
\end{eqnarray*}

Now, consider the following network of identical oscillators with all-to-all, time-delayed coupling
\begin{equation}\label{PM general network DDE}
\frac{dX_i}{dt}=F(X_i(t))+\epsilon\!\! \sum_{j=1}^N w_{ij} G(X_i(t),X_j(t-\tau_{ij})),\ i=1, \cdots, N.
\end{equation}
Here $G:\mathbb{R}^n\times\mathbb{R}^n\rightarrow\mathbb{R}^n$ describes the coupling between two oscillators,
$\epsilon$ is referred to as the coupling strength, and $W=[w_{ij}]$ is the coupling
matrix.

When $\epsilon$ is sufficiently small and $w_{ij}$ are of order $1$ with respect to $\epsilon$, we can apply the theory of weakly coupled oscillators to reduce (\ref{PM general network DDE}) to a phase model \cite{ET10,HI97,KE02}. The ways in which the time delay enters
into the phase model depends on the size of the delay relative to other time constants in the model.  Let $\Omega=2\pi/T$.
It has been shown \cite{KE02,Erm94,Izhikevich98} that if the delays
satisfy $\Omega\tau_{ij} =\mathit{O}(1)$ with respect to the coupling strength $\epsilon$,
then the appropriate model is
\begin{equation}\label{DDE phase model 0}
\frac{d\theta_i}{dt}=\Omega+\epsilon \sum_{j=1}^N W_{ij} H(\theta_j-\theta_i-\eta_{ij})+\mathit{O}(\epsilon^2), \ i=1, 2, \cdots, N,
\end{equation}
where $\eta_{ij}=\Omega \tau_{ij}$.  That is, the delays enter as phase lags.
The interaction function $H$ is a $2\pi$-periodic function which satisfies
\[
H(\theta)=\frac{1}{T}\int_{0}^{T} \hat{Z}(s)\cdot G(\hat{X}(s),\hat{X}(s+\theta/\Omega))\,ds.
\]
with $\hat{Z}$ and $\hat{X}$ as defined above.

To study cluster solutions we will make two simplifications.
First, we assume that all the delays are equal:
\begin{equation}
 \tau_{ij}=\tau,\ \mbox{i.e., } \eta_{ij}=\eta.
\label{simp1}
\end{equation}
Second, we will assume the network has some symmetry.  In particular,
we will consider
the coupling matrix to be in circulant form:
\begin{equation}
W = circ(w_0, w_1, w_2, \cdots, w_{N-1} ),\quad \mbox{equivalently,} \quad
W_{ij}=w_{j-i \modN}.
\label{simp2}
\end{equation}

Following \cite{MRTWBC}, we will say the network has connectivity radius $r$, if $w_k > 0$ for all $k\leq r$, and $w_k=0$ for all $k> r$. For example, a network with nearest neighbor coupling has connectivity radius $r=1$.  Our results will be derived with the coupling matrix 
\eqref{simp2}, but can be applied to coupling with any connectivity radius by 
setting the appropriate $w_k=0$.

Finally, we will assume there is no self coupling, thus $w_0=0$.
These simplifications will apply for the next two sections. In
section~\ref{sec:pert}, we will return to the general
model \eqref{DDE phase model 0}.


\section{Existence and stability of cluster solutions}
\label{section phase difference analysis}

Rewriting \eqref{DDE phase model 0} using the simplifications \eqref{simp1}-\eqref{simp2}
and dropping the higher order terms in $\epsilon$ we have
\begin{equation}\label{DDE phase model 1}
\frac{d\theta_i}{dt}=\Omega+\epsilon \sum_{j=1,j\ne i}^N w_{j-i \modN} H(\theta_j-\theta_i-\eta), \ i=1, 2, \cdots, N.
\end{equation}

Now the right hand sides of equation \eqref{DDE phase model 1}
depend only on the difference of phases. Thus, introducing the phase
difference variables:
\begin{eqnarray}\label{circulant PD variables}
\phi_i = \theta_{i+1}-\theta_i, \ i=1, \dots, N,
\end{eqnarray}
we can transform the phase equation (\ref{DDE phase model 1}), to the following system
\begin{eqnarray}\label{circulant PD DE}
\frac{d\phi_i}{dt} = \epsilon \sum_{k=1}^{N-1} w_k \bigg( H(\sum_{s=0}^{k-1}\phi_{i+s+1 \modN}-\eta)-H(\sum_{s=0}^{k-1}\phi_{i+s \modN}-\eta) \bigg)
\end{eqnarray}
for $i=1, 2, \cdots, N$.

Note that the $N$ phase difference variables are not independent but satisfy the relation
\begin{eqnarray}\label{constraint}
\sum_{i=1}^N \phi_i = 0 \ \mod 2\pi.
\end{eqnarray}
Thus, the $N-$dimensional system (\ref{circulant PD DE}) could be reduced to system
of dimension $N-1$. However, to take advantage of the symmetry, we choose instead to 
work with the
full set of $N$ equations and apply the constraint \eqref{constraint}.

As discussed above, a cluster solution of the DDE model \eqref{PM general network DDE}
is one where all the oscillators have the same waveform, but they separate into
different groups or clusters. Oscillators within a cluster are synchronized, while
oscillators in different clusters are phase-locked with some fixed phase difference.
It follows that in a cluster solution the difference between the phases of any two
oscillators are fixed. Using \eqref{DDE phase model 1} we can show that, to order $\epsilon$, these solutions correspond to the lines
\begin{equation}
 \theta_i=(\Omega+\epsilon \omega) t + \theta_{i0}. 
\label{phase solution}
\end{equation}
See \cite{Okuda93} for details of this calculation in the case that
$\eta=0$ and $w_k=w$. The case we are considering is completely analogous.
Further, from the definition (\ref{circulant PD variables}), it is clear that cluster
solutions correspond to equilibrium points of the phase difference equation (\ref{circulant PD DE}).  Therefore, by studying the existence of the equilibrium points of the phase difference model (\ref{circulant PD DE}), we can obtain the existence of the corresponding cluster solutions of the original DDE model.

For the sake of simplicity and generality, we focus our analysis on equilibrium solutions which are independent of $H$, and the $w_k$. It is clear from eq. (\ref{circulant PD DE}) that one such equilibrium point is given by $\phi_i=\psi$, $i=1, \dots, N$. Observe that the constraint
condition (\ref{constraint}) forces
\begin{eqnarray}
N\psi = 0 \ \text{mod $2\pi$}.
\end{eqnarray}
Different values of $\psi$ correspond to different cluster solutions. For example, $\psi=0$ corresponds to the in-phase or fully synchronized solution. When $N$ is even, $\psi=\pi$ corresponds to the anti-phase solution which is the state where oscillators segregate into two clusters and the two clusters oscillate with half-period phase difference. For a solution with more than two clusters, the value of $\psi$ determines the {\em ordering} of the
clusters/neurons in the solution.  Different values of $\psi$ can have the same number of clusters with different oscillators in the clusters and/or a different ordering
of the clusters in the solution. We shall see some examples of this in section~\ref{section application to N ML}.

\begin{theorem}[Existence of phase-locked solutions]\label{theorem psi and n}
The phase difference model (\ref{circulant PD DE}) admits $N$ equilibrium points of the form $\phi_i=\psi$, $i=1, \dots, N$:
\begin{itemize}

\item[(i)] $\psi=0$ corresponds to the 1-cluster, or fully synchronized solution.

\item[(ii)] $\psi =\frac{2p\pi}{N}$ where $p, N$ are relatively prime corresponds to an $N$-cluster, or splay solution.

\item[(iii)] $\psi = \frac{2m\pi}{n}$ where $1<n<N$ divides $N$ evenly, $1\le m<n$, and $m, n$ are relatively prime corresponds to a symmetric $n$-cluster solution.
\end{itemize}
If $\psi$ is a solution then so is $2\pi-\psi$ and they have the same number of clusters. The ordering of the clusters
of the $2\pi-\psi$ solution is the reverse of the $\psi$ solution.
\end{theorem}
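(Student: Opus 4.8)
The plan is to proceed in three stages: first verify that $\phi_i=\psi$ is an equilibrium for \emph{every} value of $\psi$, then impose the constraint \eqref{constraint} to pin down the admissible values and count them, and finally translate each admissible $\psi$ into a cluster structure by an elementary number-theoretic argument.

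For the first stage I would substitute $\phi_i=\psi$ for all $i$ directly into the right-hand side of \eqref{circulant PD DE}. Each of the two inner sums $\sum_{s=0}^{k-1}\phi_{i+s+1}$ and $\sum_{s=0}^{k-1}\phi_{i+s}$ collapses to $k\psi$, so the two copies of $H(k\psi-\eta)$ cancel term by term and every component of the vector field vanishes. This establishes that $\phi_i=\psi$ is an equilibrium regardless of the interaction function $H$ or the weights $w_k$, which is exactly the $H$- and $w_k$-independence emphasized just before the statement. Next I would sum $\phi_i=\psi$ over $i$ and apply \eqref{constraint} to get $N\psi\equiv 0 \pmod{2\pi}$, so $\psi = 2\pi p/N$ with $p\in\{0,1,\dots,N-1\}$ running over a full set of residues in $[0,2\pi)$; this yields precisely the $N$ equilibria claimed.

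To read off the cluster structure I would use \eqref{phase solution}: since all consecutive phase differences equal $\psi$, any two oscillators satisfy $\theta_j-\theta_i=(j-i)\psi$, and they lie in the same synchronized cluster exactly when $\theta_j\equiv\theta_i\pmod{2\pi}$, i.e.\ when $(j-i)\psi\equiv 0\pmod{2\pi}$. This is the bridge between the equilibrium value $\psi$ and the partition of oscillators into groups.

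The core of the argument, and the step I expect to require the most care, is the number-theoretic classification. Writing $p/N = m/n$ in lowest terms (so $\gcd(m,n)=1$ and $n\mid N$), the same-cluster condition $(j-i)\psi\equiv 0\pmod{2\pi}$ becomes $(j-i)m\equiv 0\pmod n$, and since $\gcd(m,n)=1$ this holds iff $n\mid(j-i)$. Hence the indices split into exactly $n$ synchronized groups of equal size $N/n$, giving a symmetric $n$-cluster solution. I would then verify that the three listed cases are exhaustive and mutually exclusive: $p=0$ gives $n=1$ (case (i), full synchrony); $\gcd(p,N)=1$ gives $n=N$ (case (ii), the splay state); and every remaining $p$ gives $1<n<N$ with $n\mid N$ (case (iii)). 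The identity $\sum_{n\mid N}\varphi(n)=N$ confirms that these cases account for exactly the $N$ equilibria found above. Finally, for the reflection symmetry I would note that $2\pi-\psi$ corresponds to $p\mapsto N-p$; since $\gcd(N-p,N)=\gcd(p,N)$ the reduced denominator $n$ is unchanged, so the cluster count is preserved, while replacing $\psi$ by $-\psi\pmod{2\pi}$ flips the sign of every phase difference and thereby reverses the cyclic ordering of the clusters.
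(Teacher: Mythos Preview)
Your proposal is correct and in fact supplies considerably more detail than the paper itself, which omits the proof entirely with the remark that it is similar to the argument in \cite{MRTWBC}. Your three-stage approach (direct substitution to show $\phi_i=\psi$ kills the right-hand side of \eqref{circulant PD DE} for any $H$ and $w_k$; imposing the constraint \eqref{constraint} to obtain $\psi=2\pi p/N$; then reducing $p/N$ to lowest terms to read off the cluster count and ordering) is exactly the natural line of argument and is presumably what the cited reference contains.
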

\begin{proof}
The proof is similar to that found in \cite{MRTWBC}, hence we omit it.
\end{proof}

\begin{remark}
For any $N>2$, the in-phase and at least two splay solutions always exist.
For any even number $N$, the 2-cluster solution always exists.
\end{remark}

\subsection{Stability - general circulant coupling}
To study the stability, we linearize (\ref{circulant PD DE}) about the equilibrium point $\phi_i=\psi$, and obtain
\begin{eqnarray}\label{circulant PD linearization}
\frac{d\phi}{du} = J \phi,
\end{eqnarray}
where $\phi = (\phi_1, \dots, \phi_N)^T$, and the Jacobian matrix is a circulant matrix $J = circ(c_0, c_1, \dots, c_{N-1})$
\begin{displaymath}
c_k =
\left\{
\begin{array}
    {l@{\quad \quad}l}
w_kH^\prime(k\psi-\eta), \  1\leq k \leq N-1,\\
-\sum_{s=1}^{N-1}w_sH^\prime(s\psi-\eta), \ k=0.
\end{array}\right.
\end{displaymath}
A standard result for circulant matrices \cite{gray2006} shows that
the eigenvalues of $J$ are given by
\begin{eqnarray}
\lambda_j &=& c_0 + \sum_{k=1}^{N-1} c_k e^{\frac{2\pi i}{N}kj}\nonumber\\
&=&-\sum_{k=1}^{N-1}w_kH'(k\psi-\eta)(1- e^{\frac{2\pi i}{N}kj}),\ j=0,\ldots, N-1. \label{circulant PD J eigenvalues}
\end{eqnarray}
Note that there is always a zero eigenvalue ($\lambda_0$ = 0). For the
phase difference model this comes from the fact that the phase differences
are not independent. It can be verified that if the constraint \eqref{constraint} is
used to reduce the phase difference model \eqref{circulant PD DE} to $N-1$ equations
then the linearization yields only the eigenvalues $\lambda_j,\ j=1,\ldots,N-1$.
Thus stability of the equilibrium points is determined by these eigenvalues.

We note that linearizing \eqref{DDE phase model 1} about the corresponding solution
\eqref{phase solution} yields the same eigenvalues \eqref{circulant PD J eigenvalues}.
See \cite{Okuda93} for details of this calculation in the case that $\eta=0$ and
$w_k=w,k=0,1,\ldots,N-1$.  Recall that a cluster solution corresponds to a line in
the phase model. The zero eigenvalue corresponds to the motion along this line.

From the discussion above, the system has a synchronized solution corresponding to $\psi = 0$. The elements of the Jacobian matrix for this solution are
\begin{displaymath}
c_k =
\left\{
\begin{array}
    {l@{\quad \quad}l}
w_kH^\prime(-\eta), \  1\leq k \leq N-1,\\
-H^\prime(-\eta)\sum_{s=1}^{N-1}w_s, \ k=0.
\end{array}\right.
\end{displaymath}
It follows that the real parts of eigenvalues in (\ref{circulant PD J eigenvalues}) are
\begin{eqnarray}\label{circulant PD eigenvalues for 1C}
Re(\lambda_j) = -H^\prime(-\eta) \sum_{k=1}^{N-1}w_k (1-\cos \frac{2\pi kj}{N}).
\end{eqnarray}
This leads to the following result.

\begin{theorem}[Stability of the synchronized solution]\label{Antiphasetheorem}
The stability of the synchronized solution of the phase difference model
\eqref{circulant PD DE} is independent of the size of the network and coupling between oscillators ($w_k$). In particular, the synchronized solution is asymptotically stable when $H^\prime(-\eta) > 0$, and unstable when $H^\prime(-\eta) < 0$.
\end{theorem}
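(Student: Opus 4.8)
The plan is to read off stability directly from the real-part formula \eqref{circulant PD eigenvalues for 1C}, which is already in hand, and to show that the sign of every relevant eigenvalue is controlled by the single scalar $H'(-\eta)$. Since $\lambda_0 = 0$ always corresponds to motion along the cluster line, stability of the synchronized state is governed by the eigenvalues $\lambda_j$ with $j = 1, \ldots, N-1$, so I would restrict attention to these from the outset. Working with the real part means I never have to worry that some $\lambda_j$ is complex.

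First I would write the real part as $Re(\lambda_j) = -H'(-\eta)\,S_j$, where
\[
S_j = \sum_{k=1}^{N-1} w_k\left(1 - \cos\tfrac{2\pi k j}{N}\right).
\]
The coupling weights satisfy $w_k \geq 0$ and each factor $1 - \cos(2\pi kj/N) \geq 0$, so every summand is non-negative and hence $S_j \geq 0$. This already shows that the sign of $Re(\lambda_j)$ is opposite to that of $H'(-\eta)$ and, crucially, is independent of the particular magnitudes of the $w_k$ and of $N$: the coupling enters only through the non-negative prefactor $S_j$, never through its sign. That is the precise sense in which the stability conclusion is insensitive to the network size and the coupling profile.

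The key step is to upgrade $S_j \geq 0$ to $S_j > 0$ for every $j \in \{1, \ldots, N-1\}$, which is what separates asymptotic stability from merely marginal stability (and, in the unstable case, what guarantees a genuinely positive real part rather than a zero one). For this I would invoke the connectivity radius assumption, which gives $w_1 > 0$. Then $S_j \geq w_1\left(1 - \cos\tfrac{2\pi j}{N}\right)$, and since $2\pi j/N$ is not an integer multiple of $2\pi$ for $1 \leq j \leq N-1$, we have $\cos(2\pi j/N) < 1$ and thus $S_j > 0$. With strict positivity in place, $Re(\lambda_j) < 0$ for all $j = 1, \ldots, N-1$ when $H'(-\eta) > 0$, giving asymptotic stability, while $Re(\lambda_j) > 0$ for those same $j$ when $H'(-\eta) < 0$, giving instability.

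The main obstacle is precisely this strict-positivity step. If the coupling were supported only on indices $k$ sharing a common factor with $N$, some $S_j$ could vanish and the conclusion would degrade to the marginal case, where the linearization no longer decides stability. The connectivity radius convention ($w_k > 0$ for all $k \le r$ with $r \ge 1$) sidesteps this cleanly by ensuring $w_1 > 0$, so that the $k=1$ term alone forces $S_j > 0$ for every relevant $j$. I would state this dependence explicitly, since it is what makes the independence from $N$ and from the coupling weights a genuine theorem rather than a coincidence of a particular network.
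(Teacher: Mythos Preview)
Your proof is correct and follows the same approach as the paper: both read stability directly from formula \eqref{circulant PD eigenvalues for 1C} by observing that $Re(\lambda_j)$ has the sign of $-H'(-\eta)$ times a non-negative coupling-dependent factor. The paper is terser---it simply presents the formula and announces the theorem---whereas you have been more explicit about why the factor $S_j$ is strictly positive for $1\le j\le N-1$, invoking $w_1>0$ from the connectivity-radius convention. That extra care is worthwhile, since without it the linearization could be inconclusive; the paper leaves this step implicit.
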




We know that when $N$ is even, the phase model always admits a 2-cluster solution, which corresponds to $\psi = \pi$ in the phase difference model. In this case, the Jacobian matrix satisfies
\begin{displaymath}
c_k =
\left\{
\begin{array}
    {l@{\quad \quad}l}
w_kH^\prime(\pi-\eta), \  k=1, 3, 5, \cdots, N-1,\\
w_kH^\prime(-\eta), \  k=2, 4, 6, \cdots, N-2,\\
-\sum_{s=1}^{N-1}c_s, \ k=0.
\end{array}\right.
\end{displaymath}
Therefore, the real parts of nonzero eigenvalues in (\ref{circulant PD J eigenvalues}) are given by
\[
Re(\lambda_{\frac{N}{2}}) = -2 H^\prime(\pi-\eta) \sum_{k=1, k \ odd}^{N-1}w_k
\]
and, for $j=1,\ldots, \frac{N}{2}-1,\frac{N}{2}+1,\ldots, N-1:$
\begin{eqnarray*}\label{circulant PD eigenvalues for 2C}
Re(\lambda_j) = -H^\prime(\pi-\eta) \sum_{k=1, k \ odd}^{N-1}w_k (1-\cos \frac{2\pi kj}{N}) -H^\prime(-\eta) \sum_{k=2, k \ even}^{N-2}w_k (1-\cos \frac{2\pi kj}{N}).
\end{eqnarray*}
This leads to the following
\begin{theorem}[Stability of the anti-phase solution]\label{circulant corollary 1C}
If $N$ is even the phase difference model \eqref{circulant PD DE}
admits the anti-phase cluster solution where adjacent
oscillators are out of phase by one half the period.  If $H'(\eta)>0$ and $H'(\pi-\eta)>0$ then this solution is asymptotically stable. If $H'(\pi-\eta)<0$ then this solution is unstable.
\end{theorem}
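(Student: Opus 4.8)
The existence half of the statement is essentially free: since $N$ is even, the remark following Theorem~\ref{theorem psi and n} already supplies the $2$-cluster equilibrium $\phi_i=\psi=\pi$ of \eqref{circulant PD DE}, and $\phi_i=\theta_{i+1}-\theta_i=\pi$ is exactly the condition that adjacent oscillators be half a period out of phase. The whole problem is therefore spectral: I would read off the signs of the real parts of the nonzero eigenvalues $\lambda_1,\dots,\lambda_{N-1}$ from the two expressions for $Re(\lambda_j)$ displayed immediately before the statement, treating the index $j=N/2$ separately from the rest. Throughout I would use that the network has connectivity radius $r\ge 1$, so that $w_1>0$ and $\sum_{k\ \mathrm{odd}}w_k>0$.

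The instability assertion is the quick one. It uses only $Re(\lambda_{N/2})=-2H'(\pi-\eta)\sum_{k\ \mathrm{odd}}w_k$. Since the odd sum is strictly positive, $Re(\lambda_{N/2})$ carries the opposite sign to $H'(\pi-\eta)$; hence $H'(\pi-\eta)<0$ forces $Re(\lambda_{N/2})>0$, placing an eigenvalue in the open right half-plane and making the equilibrium unstable. No other eigenvalue needs to be examined for this direction.

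For asymptotic stability I would show that every nonzero $\lambda_j$ has strictly negative real part under the two hypotheses. Abbreviate $A_j=\sum_{k\ \mathrm{odd}}w_k\bigl(1-\cos\tfrac{2\pi kj}{N}\bigr)$ and $B_j=\sum_{k\ \mathrm{even}}w_k\bigl(1-\cos\tfrac{2\pi kj}{N}\bigr)$, so that $Re(\lambda_j)=-H'(\pi-\eta)A_j-H'(-\eta)B_j$. Because every $w_k\ge 0$ and every factor $1-\cos\tfrac{2\pi kj}{N}\ge 0$, both $A_j$ and $B_j$ are nonnegative; with $H'(\pi-\eta)>0$ and $H'(-\eta)>0$ each summand is $\le 0$, giving $Re(\lambda_j)\le 0$. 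The eigenvalue $\lambda_{N/2}$ is already strictly negative by the previous paragraph, and for the remaining indices strict negativity comes from the $k=1$ term of $A_j$: for $1\le j\le N-1$ the angle $\tfrac{2\pi j}{N}$ lies in $(0,2\pi)$, so $1-\cos\tfrac{2\pi j}{N}>0$, and since $w_1>0$ this forces $A_j>0$ and hence $Re(\lambda_j)<0$. With the trivial eigenvalue $\lambda_0=0$ discarded via the constraint \eqref{constraint}, all surviving eigenvalues lie in the open left half-plane and asymptotic stability follows.

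The only genuinely delicate step is obtaining \emph{strict} negativity rather than mere nonpositivity in the stable case, i.e. ruling out the degenerate situation in which $1-\cos\tfrac{2\pi kj}{N}$ vanishes for every $k$ with $w_k>0$; this is precisely where the connectivity hypothesis $w_1>0$ does the work. It is also worth noting at this point that the two stability conditions match the eigenvalue formula term-by-term, with $H'(\pi-\eta)$ governing the odd-$k$ (genuinely anti-phase) interactions and the even-$k$ (within-cluster) interactions governed by $H'(-\eta)$; I would verify that the $H'(\eta)$ written in the statement is to be read as this $H'(-\eta)$.
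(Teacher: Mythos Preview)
Your argument is correct and is exactly the approach implicit in the paper: the paper simply writes down the expressions for $Re(\lambda_{N/2})$ and $Re(\lambda_j)$ and asserts the theorem with ``This leads to the following,'' so your write-up is precisely the omitted verification, including the care needed to get strict negativity via the $k=1$ term and the standing hypothesis $w_1>0$. Your closing remark is also on point: the hypothesis $H'(\eta)>0$ in the statement should indeed be read as $H'(-\eta)>0$, since it is $H'(-\eta)$ that multiplies the even-$k$ sum in the eigenvalue formula.
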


\begin{remark}
In the above stability results, we assume $\epsilon > 0 $. If $\epsilon < 0 $, the stability of asymptotically stable solutions and totally unstable solutions will be reversed, and the saddle type solutions will remain of saddle type.
\end{remark}


\subsection{Stability analysis for bi-directional, distance dependent coupling}

In this section, we consider a special case where the coupling strength is distance-dependent and bi-directional. In real neural networks, coupling strength is not necessarily determined by the physical distance. However, the ``distance" here can be generalized to include functional distance \cite{Li03}: the degree of correlation in the activity of coupled neurons. Therefore, we consider a
coupling matrix that satisfies
\begin{equation}
W=circ(0,w_1,w_2,\ldots,w_{N/2},\ldots,w_2,w_1)
\label{bidireven}
\end{equation}
if $N$ is even, and
\begin{equation}
W=circ(0,w_1,w_2,\ldots,w_{(N-1)/2},w_{(N-1)/2},\ldots,w_2,w_1)
\label{bidirodd}
\end{equation}
if $N$ is odd.

Applying the results above to this system we find that the elements
of the Jacobian matrix are
\begin{displaymath}
c_k =
\left\{
\begin{array}
    {l@{\quad \quad}l}
w_kH^\prime(k\psi-\eta), \  1\leq k \leq N/2,\\
w_{N-k} H^\prime(k\psi-\eta), \ k> N/2 \\
-\sum_{k=1}^{\frac{N}{2}-1}w_k\big(H^\prime(k\psi-\eta)+H^\prime((N-k)\psi-\eta)\big)-w_{\frac{N}{2}}H^\prime(\frac{N\psi}{2}-\eta),\ k=0
\end{array}\right.
\end{displaymath}
when $N$ is even, and
\begin{displaymath}
 c_k =
\left\{
\begin{array}
    {l@{\quad \quad}l}
w_kH^\prime(k\psi-\eta), \  1\leq k \leq (N-1)/2,\\
w_{N-k} H^\prime(k\psi-\eta), \ k> (N-1)/2 \\
-\sum_{k=1}^{(N-1)/2}w_k\big(H^\prime(k\psi-\eta)+H^\prime((N-k)\psi-\eta)\big),\ k=0
\end{array}\right.
\end{displaymath}
when $N$ is odd.

Recall that $\psi$ and $2\pi-\psi$ correspond to the same type of cluster
solution.  For a network with bi-directional coupling, these solutions have a
stronger relationship.
\begin{theorem}
For the phase model with coupling matrix given by \eqref{bidireven} or \eqref{bidirodd}, the solutions $\phi_i =\psi$ and $\phi_i = 2\pi-\psi$ have the same stability.
\end{theorem}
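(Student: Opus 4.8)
The stability of each equilibrium is governed by the real parts of the nonzero eigenvalues $\lambda_j$, $j=1,\dots,N-1$, in \eqref{circulant PD J eigenvalues} (the eigenvalue $\lambda_0=0$ always being present because of the constraint \eqref{constraint}). My plan is therefore to compare the spectra at $\phi_i=\psi$ and $\phi_i=2\pi-\psi$ directly and to show that they are complex conjugates of one another; since conjugate numbers have equal real parts, this forces the two equilibria to have identical stability. For the bi-directional matrices \eqref{bidireven} or \eqref{bidirodd} the weights are symmetric, $w_k=w_{N-k}$, so writing $w_k$ for these symmetric weights the eigenvalues take the form
\[
\lambda_j(\psi)=-\sum_{k=1}^{N-1} w_k\, H'(k\psi-\eta)\left(1-e^{\frac{2\pi i}{N}kj}\right).
\]
The whole argument rests on combining this symmetry with the cluster constraint $N\psi=0\ (\mathrm{mod}\ 2\pi)$ that $\psi$ must satisfy.

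First I would substitute $\psi\mapsto 2\pi-\psi$; since $H$, and hence $H'$, is $2\pi$-periodic, $H'(k(2\pi-\psi)-\eta)=H'(-k\psi-\eta)$, so $\lambda_j(2\pi-\psi)=-\sum_{k=1}^{N-1} w_k H'(-k\psi-\eta)\left(1-e^{\frac{2\pi i}{N}kj}\right)$. The crucial step is then to reindex the sum by $k\mapsto N-k$ and apply three facts at once: the symmetry $w_{N-k}=w_k$, the unit-root identity $e^{\frac{2\pi i}{N}(N-k)j}=e^{-\frac{2\pi i}{N}kj}$, and the constraint, which gives $H'(-(N-k)\psi-\eta)=H'(k\psi-\eta-N\psi)=H'(k\psi-\eta)$. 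Assembling these yields $\lambda_j(2\pi-\psi)=-\sum_{k=1}^{N-1} w_k H'(k\psi-\eta)\left(1-e^{-\frac{2\pi i}{N}kj}\right)=\overline{\lambda_j(\psi)}$. I expect this reindexing to be the only real obstacle: all three ingredients must be used together and in the right order, and dropping the constraint leaves the spurious argument $H'(-k\psi-\eta)$ that cannot in general be related to $H'(k\psi-\eta)$ --- which is exactly why the statement fails for non-symmetric circulant coupling. Once $\lambda_j(2\pi-\psi)=\overline{\lambda_j(\psi)}$ is established, $Re(\lambda_j(2\pi-\psi))=Re(\lambda_j(\psi))$ for every $j$, and the conclusion follows immediately.

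Finally, I would record the conceptual reason behind the conjugation identity, as it both motivates and double-checks the computation. The relation reflects a reflection symmetry of the phase-difference system \eqref{circulant PD DE} that is present precisely when the coupling is bi-directional: the order reversal $\phi_i\mapsto -\phi_{N-i}$ leaves the vector field invariant and maps the equilibrium $\phi_i=\psi$ to $\phi_i=2\pi-\psi\ (\mathrm{mod}\ 2\pi)$, so the two equilibria are conjugate under a symmetry and must share their stability type. One could instead build the whole proof on verifying this equivariance, but I would keep the direct eigenvalue computation as the main argument, since it is short and self-contained, and mention the symmetry only as the underlying explanation.
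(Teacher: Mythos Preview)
Your proposal is correct and follows essentially the same approach as the paper: both arguments use the bi-directional symmetry $w_{N-k}=w_k$, the $2\pi$-periodicity of $H'$, and the constraint $N\psi\equiv 0\pmod{2\pi}$ to relate the linearizations at $\psi$ and $2\pi-\psi$. The only cosmetic difference is that the paper works at the level of the circulant entries, showing $\tilde c_{N-k}=c_k$ (hence $J$ and $\tilde J$ share the same spectrum), whereas you substitute directly into the eigenvalue formula \eqref{circulant PD J eigenvalues} and obtain $\lambda_j(2\pi-\psi)=\overline{\lambda_j(\psi)}$; these are equivalent computations, and your added remark on the underlying reflection symmetry is a nice explanation not present in the paper.
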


\begin{proof}
Denote the Jacobian matrix for the linearization equation at $\phi_i = \psi$ and $\phi_i = 2\pi-\psi$ to be $J=circ( c_0, c_1, \dots, c_{N-1})$, and $\tilde J = circ(\tilde c_0, \tilde c_1, \dots, \tilde c_{N-1} )$, respectively. By Theorem \ref{theorem psi and n}, we know that there are $N$ possible $\psi$ values, $\psi = \frac{2k\pi}{N}$, $k = 0, 1, \dots, N-1$. Therefore,
\begin{eqnarray*}
\tilde c_{N-1} = w_{N-1}H^\prime (\frac{(N-k)(N-1)2\pi}{N}-\eta) = w_{N-1}H^\prime(\frac{2\pi m}{N}-\eta).
\end{eqnarray*}
Since $w_{N-1} = w_1$, we have $\tilde c_{N-1} = c_1$. Similarly, we have $\tilde c_0 = c_0$, and $\tilde c_{N-2} = c_2$, \dots. Therefore, $J$ and $\tilde J$ have the same eigenvalues. Thus, $\psi$ and $2\pi -\psi$ have the same stability.

\end{proof}

A special case of bi-directional coupling is when the only nonzero coupling
coefficient is $w_1$. This is commonly called nearest-neighbour coupling.
In this case the stability of any symmetric cluster solution is easily determined.
\begin{theorem}
For the phase model with coupling matrix given by \eqref{bidireven} or \eqref{bidirodd}
with $w_1\ne0$ and $w_j=0,\ j=2,\ldots,N$, the the symmetric cluster
solution with $\phi_i =\psi$ is asymptotically stable if
$H'(\psi-\eta)+H'(-\psi-\eta)>0$ and unstable if $H'(\psi-\eta)+H'(-\psi-\eta)<0.$
\end{theorem}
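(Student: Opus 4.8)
The plan is to specialize the general circulant eigenvalue formula \eqref{circulant PD J eigenvalues} to the nearest-neighbour case and read off the sign of the real parts. First I would substitute $w_1\ne 0$, $w_j=0$ for $j\ge 2$, into the Jacobian entries listed above for bi-directional coupling. In both the even-$N$ and odd-$N$ cases this leaves only three nonzero entries, namely $c_1=w_1H'(\psi-\eta)$, $c_{N-1}=w_1H'((N-1)\psi-\eta)$, and $c_0=-(c_1+c_{N-1})$, so that $J=\mathrm{circ}(c_0,c_1,0,\ldots,0,c_{N-1})$. The two boundary cases thus collapse to the same matrix and I only need to argue once.

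Next I would simplify the argument of $c_{N-1}$. By Theorem~\ref{theorem psi and n} every admissible $\psi$ has the form $\psi=2p\pi/N$ (or a divisor thereof), so $N\psi\equiv 0\pmod{2\pi}$ and hence $(N-1)\psi\equiv -\psi\pmod{2\pi}$. Since $H$ is $2\pi$-periodic, so is $H'$, giving $H'((N-1)\psi-\eta)=H'(-\psi-\eta)$. Writing $a=H'(\psi-\eta)$ and $b=H'(-\psi-\eta)$, the three nonzero entries become $c_1=w_1a$, $c_{N-1}=w_1b$, and $c_0=-w_1(a+b)$.

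Then I would insert these into \eqref{circulant PD J eigenvalues}. Using $e^{2\pi i(N-1)j/N}=e^{-2\pi ij/N}$, the eigenvalues are
\begin{equation*}
\lambda_j=w_1\big[a(e^{2\pi ij/N}-1)+b(e^{-2\pi ij/N}-1)\big],\quad j=0,\ldots,N-1,
\end{equation*}
and taking real parts (the imaginary parts of the two bracketed terms cancel since their sine contributions have opposite signs while their cosine contributions coincide) yields the clean factorization
\begin{equation*}
\mathrm{Re}(\lambda_j)=w_1\,(a+b)\,\Big(\cos\tfrac{2\pi j}{N}-1\Big).
\end{equation*}

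Finally, since $w_1>0$ and $\cos(2\pi j/N)-1<0$ for every $j=1,\ldots,N-1$ (the factor vanishes only at the constraint-induced $j=0$), the sign of $\mathrm{Re}(\lambda_j)$ for all nonzero eigenvalues is exactly that of $-(a+b)$. Hence all nonzero eigenvalues have negative real part when $a+b=H'(\psi-\eta)+H'(-\psi-\eta)>0$, and positive real part when $a+b<0$, establishing asymptotic stability and instability respectively. The only step requiring genuine care is the reduction $(N-1)\psi\equiv-\psi$, which is where the arithmetic structure of the admissible $\psi$ from Theorem~\ref{theorem psi and n} is used; once that identity is in hand the eigenvalue real parts factor and the conclusion is immediate.
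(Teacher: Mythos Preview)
Your argument is correct and follows exactly the route the paper takes: specialize the circulant eigenvalue formula \eqref{circulant PD J eigenvalues} to the case where only $w_1=w_{N-1}$ is nonzero, use $N\psi\equiv 0\pmod{2\pi}$ to rewrite $H'((N-1)\psi-\eta)=H'(-\psi-\eta)$, and read off the factored real part ${\rm Re}(\lambda_j)=-w_1\big[H'(\psi-\eta)+H'(-\psi-\eta)\big](1-\cos(2\pi j/N))$. The paper compresses all of this into ``a straightforward calculation''; you have simply made the steps explicit. One small inaccuracy: your parenthetical remark that the imaginary parts of the two bracketed terms cancel is not right in general (the imaginary part is $w_1(a-b)\sin(2\pi j/N)$), but this is irrelevant to the stability argument, which depends only on the real parts.
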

\begin{proof}
A straightforward calculation from \eqref{circulant PD J eigenvalues}
shows that the real parts of the eigenvalues of the solution  $\phi_i =\psi$
are given by
\[ {\rm Re}(\lambda_j)=-w_1\left[H'(\psi-\eta)+H'(-\psi-\eta)\right](1-\cos(2\pi j/N)),\ j=1,\ldots, N-1 \]
The result follows.
\end{proof}
Note that this is an extension of a result of \cite{MRTWBC} to the case when the
coupling is delayed.


\subsection{Stability analysis for global homogeneous coupling}

We next consider a special case: $W_1= circ(0, 1, \cdots, 1)$. That is, all the coupling weights are the same.
A straightforward calculation show that the eigenvalues \eqref{circulant PD J eigenvalues}
for a symmetric $n$-cluster solution in this case can be written as follows:
\begin{eqnarray}
\begin{aligned}
\lambda_0& = 0,\\
{\lambda}^{(n)}_0 &= -\frac{N}{n}\sum_{k=0}^{n-1}H^\prime(\frac{2\pi k}{n} - \eta),\  \text{multiplicity $N-n$},\\
{\lambda}^{(n)}_j&=-\frac{N}{n}\sum_{k=0}^{n-1}H^\prime(\frac{2\pi k}{n} - \eta)(1-e^{i2\pi kj/n}), \ p=1, \cdots, n-1.
\end{aligned}
\label{homevals}
\end{eqnarray}
This is identical to what was shown in \cite{WS15}, where they made the following
observation. The stability of an
$n$-cluster solution (with $n<N$) depends on the number of clusters and
the phase differences, not the size of the network. For example,
any network with $N=3m$ ($m$ a positive integer) has a $3$-cluster
solution with $\psi=2\pi/3$. The stability of this solution is the
same for all networks with $m>1$.

\begin{remark}
As discussed in \cite{WS15}, since networks with global homogeneous
coupling are unchanged by
any rearrangement of the indices, there are many more cluster solutions.
For example, consider a network where $N>2$ is even.
When the connection matrix is circulant with different $w_k$,
there is one $2$-cluster solution with
oscillators $1,3,5,\ldots, N-1$ forming one cluster and
oscillators $2,4,\ldots N$ forming the second cluster. For a network
with global homogeneous coupling, {\em any} division of the oscillators
into two groups of $N/2$ oscillators is an admissible $2$-cluster
solution with stability described by  \eqref{homevals} with $n=2$.
\end{remark}


\subsection{Other types of cluster solutions}

If more conditions are put on the coupling matrix then different cluster
solutions may occur. For example, consider a 2-cluster solution where the
phase differences between adjacent elements is not the same, but is
described by
\begin{equation}
\phi_1 = \phi_3 = \cdots = \phi_{N-1} = 0, \ \text{and } \phi_2 = \phi_4 = \cdots = \phi_N = \pi, \label{AC2 solution 1}
\end{equation}
or
\begin{equation}
\phi_1 = \phi_3 = \cdots = \phi_{N-1} = \pi, \ \text{and } \phi_2 = \phi_4 = \cdots = \phi_N = 0. \label{AC2 solution 2}
\end{equation}
In this situation the elements group into pairs, so that each element is synchronized
with one of its nearest neighbours and one-half period out of phase with its other
nearest neighbour.
As shown by the next result, these solutions exist under
appropriate conditions on the connectivity matrix.
\begin{theorem}\label{theorem AC2 existence condt}
For a network with a circulant connectivity matrix, the system (\ref{circulant PD DE}) admits solutions of the form (\ref{AC2 solution 1}) and (\ref{AC2 solution 2}) if $N=4p$ for some integer $p$, and $\sum_{k=0}^{p-1} w_{4k+1} = \sum_{k=0}^{p-1} w_{4k+3} $.
\end{theorem}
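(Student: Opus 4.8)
The plan is to exploit the fact, noted just before the statement, that a configuration of the form \eqref{AC2 solution 1} or \eqref{AC2 solution 2} corresponds to an admissible cluster solution precisely when the constant vector $\phi$ is an equilibrium of \eqref{circulant PD DE} \emph{and} satisfies the phase-difference constraint \eqref{constraint}. I would check these two requirements separately. The constraint is the quick part and is where the hypothesis $N=4p$ originates: on \eqref{AC2 solution 1} exactly the even-indexed $\phi_i$ equal $\pi$, and there are $N/2$ of them, so $\sum_{i=1}^N\phi_i=\tfrac{N}{2}\pi$, which is $\equiv 0\ (\mathrm{mod}\ 2\pi)$ if and only if $N/2$ is even, i.e. $N=4p$. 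The same count applies verbatim to \eqref{AC2 solution 2}.

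For the equilibrium condition I would introduce the partial sums $\Phi_i^{(k)}:=\sum_{s=0}^{k-1}\phi_{i+s\modN}$, so that the right-hand side of \eqref{circulant PD DE} at index $i$ is $\epsilon\sum_{k=1}^{N-1}w_k\big(H(\Phi_{i+1}^{(k)}-\eta)-H(\Phi_i^{(k)}-\eta)\big)$, and the task is to show this vanishes for each $i$. The key observation is that on \eqref{AC2 solution 1} each $\phi_j$ is $0$ or $\pi$ according to the parity of $j$, and since $N$ is even this parity is preserved under reduction $\modN$. Hence $\Phi_i^{(k)}$ is $\pi$ times the number of even indices in the length-$k$ window $\{i,\dots,i+k-1\}$, which equals $\lfloor k/2\rfloor\,\pi$ when $i$ is odd and $\lceil k/2\rceil\,\pi$ when $i$ is even. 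Using the $2\pi$-periodicity of $H$, every term $H(\Phi_i^{(k)}-\eta)$ then collapses to $H(-\eta)$ or $H(\pi-\eta)$ according only to the parity of that integer count, and by the circulant structure only two distinct equations (one for odd $i$, one for even $i$) need to be verified.

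Assembling the sum for $i$ odd is then bookkeeping: the difference $H(\Phi_{i+1}^{(k)}-\eta)-H(\Phi_i^{(k)}-\eta)$ is governed by $\lceil k/2\rceil$ versus $\lfloor k/2\rfloor$, so the even-$k$ terms cancel outright, while the odd-$k$ terms split according to $k\bmod 4$, contributing $+[H(\pi-\eta)-H(-\eta)]$ when $k\equiv1$ and $-[H(\pi-\eta)-H(-\eta)]$ when $k\equiv3$. Because $N=4p$, the residues $k\equiv 1$ and $k\equiv 3\ (\mathrm{mod}\ 4)$ in $\{1,\dots,N-1\}$ are exactly $\{4\ell+1\}_{\ell=0}^{p-1}$ and $\{4\ell+3\}_{\ell=0}^{p-1}$, so the whole right-hand side factors as
\[ \big[H(\pi-\eta)-H(-\eta)\big]\Big(\sum_{\ell=0}^{p-1}w_{4\ell+1}-\sum_{\ell=0}^{p-1}w_{4\ell+3}\Big), \]
which vanishes under the stated hypothesis $\sum_{\ell=0}^{p-1}w_{4\ell+1}=\sum_{\ell=0}^{p-1}w_{4\ell+3}$. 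The case $i$ even yields the same factor up to an overall sign and hence vanishes under the identical condition, and \eqref{AC2 solution 2} follows either by repeating the computation with the parities interchanged or by noting that it is the cyclic index-shift of \eqref{AC2 solution 1}, which is a symmetry of the circulant system. I expect the only place errors are likely is the parity-and-wraparound bookkeeping that converts the $H$-differences into the clean $k\bmod 4$ sign pattern; once that is pinned down the conclusion is immediate.
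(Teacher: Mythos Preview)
Your proposal is correct and follows essentially the same route as the paper's proof: first use the constraint \eqref{constraint} to force $N=4p$, then substitute the candidate configuration into \eqref{circulant PD DE} and reduce every $H$-value to $H(-\eta)$ or $H(\pi-\eta)$ so that the right-hand side factors as $[H(\pi-\eta)-H(-\eta)]\big(\sum w_{4\ell+1}-\sum w_{4\ell+3}\big)$. The paper's argument is terser---it simply asserts the factored identity after substitution---whereas you spell out the $\lfloor k/2\rfloor$ versus $\lceil k/2\rceil$ bookkeeping that produces the $k\bmod 4$ sign pattern; but the underlying computation is identical.
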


\begin{proof}
Applying the constraint condition (\ref{constraint}) to \eqref{AC2 solution 1}
or \eqref{AC2 solution 2}, we have that, for some integer $p$,
\begin{eqnarray*}
\frac{N}{2}\cdot \pi = 2p \pi.
\end{eqnarray*}
Therefore, $N=4p$, for some integer $p$.

Substituting solution (\ref{AC2 solution 1}) or (\ref{AC2 solution 2}) into the system (\ref{circulant PD DE}), we have that
\begin{eqnarray*}
\sum_{k=0}^{p-1} w_{4k+1} \big( H(\pi-\eta) - H(-\eta) \big) = \sum_{k=0}^{p-1} w_{4k+3} \big( H(\pi-\eta) - H(-\eta) \big).
\end{eqnarray*}
To satisfy this for any $H$, we must have $\sum_{k=0}^{p-1} w_{4k+1} = \sum_{k=0}^{p-1} w_{4k+3} $.

\end{proof}

\begin{remark}
Note that, for networks with bi-directional coupling or global homogeneous coupling, the second condition, $\sum_{k=0}^{p-1} w_{4k+1} = \sum_{k=0}^{p-1} w_{4k+3} $, is automatically satisfied if $N=4p$.
\end{remark}

We consider the $2$-cluster solutions in the form of (\ref{AC2 solution 1}) first. Assume the conditions of Theorem \ref{theorem AC2 existence condt} are satisfied. Linearizing the system (\ref{circulant PD DE}) at (\ref{AC2 solution 1}), we obtain that
\begin{eqnarray}\label{AC2 linear DE L}
\frac{d\phi}{dt} = \epsilon L \phi,
\end{eqnarray}
where the Jacobian matrix has the form

\parbox{10cm}{
\begin{displaymath}
L =
\begin{pmatrix}
\alpha_0 & \alpha_1 & \alpha_2 & \alpha_3 & \cdots & \alpha_{N-1}\\
\beta_{N-1} & \beta_0 & \beta_1 & \beta_2 & \cdots & \beta_{N-2} \\
\alpha_{N-2} &\alpha_{N-1} & \alpha_0 & \alpha_1 & \cdots & \alpha_{N-3}\\
\beta_{N-3} & \beta_{N-2} & \beta_{N-1}  & \beta_0 & \cdots & \beta_{N-4} \\
\vdots & \vdots & \vdots & \vdots & \ddots & \vdots \\
\alpha_2 & \alpha_3 & \alpha_4 & \alpha_5 & \cdots & \alpha_1 \\
\beta_1 & \beta_2 &\beta_3 & \beta_4 & \cdots & \beta_0
\end{pmatrix},
\end{displaymath}
}\hfill
\parbox{1cm}{\begin{eqnarray}\label{AC2 Jacobian 1}\end{eqnarray}}

\noindent with
\begin{eqnarray*}
\begin{aligned}
\alpha_0 &= -H^\prime (-\eta)\big(\sum_{k=0}^{p-1} w_{4k+1}+ \sum_{k=1}^{p-1} w_{4k}\big)-H^\prime (\pi-\eta) \sum_{k=0}^{p-1} (w_{4k+2}+w_{4k+3}),\\
\beta_0 &= -H^\prime (-\eta)\big(\sum_{k=0}^{p-1} w_{4k+3}+ \sum_{k=1}^{p-1} w_{4k}\big)-H^\prime (\pi-\eta) \sum_{k=0}^{p-1} (w_{4k+1}+w_{4k+2}),
\end{aligned}
\end{eqnarray*}
and for $k=1, \cdots, N-1$
\begin{displaymath}
\alpha_k =
\left\{
\begin{array}
    {l@{\quad \quad}l}
w_k H^\prime (\pi-\eta) + \big(H^\prime (\pi-\eta) - H^\prime (-\eta) \big)\big( \sum_{j=s+1}^{p-1} w_{4j+1} - \sum_{j=s}^{p-1} w_{4j+3}  \big), \  k=4s+1, 4s+2 \\
w_k H^\prime (-\eta) + \big(H^\prime (\pi-\eta) - H^\prime (-\eta) \big)\big( \sum_{j=s+1}^{p-1} w_{4j+1} - \sum_{j=s+1}^{p-1} w_{4j+3}  \big), \  k=4s+3, 4s\\
\end{array}\right.
\end{displaymath}

for all the possible $s$ values, and
\begin{displaymath}
\beta_k =
\left\{
\begin{array}
    {l@{\quad \quad}l}
w_k H^\prime (-\eta) - \big(H^\prime (\pi-\eta) - H^\prime (-\eta) \big)\big( \sum_{j=s+1}^{p-1} w_{4j+1} - \sum_{j=s}^{p-1} w_{4j+3}  \big), \  k=4s+1, \\
w_k H^\prime (\pi-\eta) - \big(H^\prime (\pi-\eta) - H^\prime (-\eta) \big)\big( \sum_{j=s+1}^{p-1} w_{4j+1} - \sum_{j=s}^{p-1} w_{4j+3}  \big), \  k=4s+2, \\
w_k H^\prime (\pi-\eta) -\big(H^\prime (\pi-\eta) - H^\prime (-\eta) \big)\big( \sum_{j=s+1}^{p-1} w_{4j+1} - \sum_{j=s+1}^{p-1} w_{4j+3}  \big), \  k=4s+3\\
w_k H^\prime (-\eta) -\big(H^\prime (\pi-\eta) - H^\prime (-\eta) \big)\big( \sum_{j=s+1}^{p-1} w_{4j+1} - \sum_{j=s+1}^{p-1} w_{4j+3}  \big), \  k=4s\\
\end{array}\right.
\end{displaymath}
for all the possible $s$ values.

\begin{remark}
Solutions (\ref{AC2 solution 1}) and (\ref{AC2 solution 2}) have the same stability. The Jacobian matrix of the linearization of system (\ref{circulant PD DE}) at (\ref{AC2 solution 2}) is in the form

\parbox{10cm}{
\begin{displaymath}
\hat L =
\begin{pmatrix}
\beta_0 & \beta_1 & \beta_2 & \beta_3 & \cdots & \beta_{N-1}\\
\alpha_{N-1} & \alpha_0 & \alpha_1 & \alpha_2 & \cdots & \alpha_{N-2} \\
\beta_{N-2} &\beta_{N-1} & \beta_0 & \beta_1 & \cdots & \beta_{N-3}\\
\alpha_{N-3} & \alpha_{N-2} & \alpha_{N-1}  & \alpha_0 & \cdots & \alpha_{N-4} \\
\vdots & \vdots & \vdots & \vdots & \ddots & \vdots \\
\beta_2 & \beta_3 & \beta_4 & \beta_5 & \cdots & \beta_1 \\
\alpha_1 & \alpha_2 &\alpha_3 & \alpha_4 & \cdots & \alpha_0
\end{pmatrix},
\end{displaymath}
}\hfill
\parbox{1cm}{\begin{eqnarray}\label{AC2 Jacobian 2}\end{eqnarray}}

\noindent which is equivalent to $L$.
\end{remark}

We were not able to obtain general results about the eigenvalues of 
(\ref{AC2 Jacobian 1}) and (\ref{AC2 Jacobian 2}).  Thus, we are not able to make any 
general conclusions about the stability of solutions (\ref{AC2 solution 1}) 
and (\ref{AC2 solution 2}). However, in particular cases the eigenvalues can be calculated 
numerically from the expressions above. We will do this for the example in the next section.


\section{Application to networks of Morris-Lecar oscillators with global synaptic coupling}
\label{section application to N ML}
In this section, we apply our results to a specific network: globally coupled Morris-Lecar oscillators. Since the nondimensional form of Morris-Lecar equation is more convenient to work with, we adopt the dimensionless Morris-Lecar model which is formulated by Rinzel and Ermentrout in \cite{RE89} and used in Campbell and Kobelevskiy \cite{CK12}. Considering $N$ identical Morris-Lecar oscillators with delayed synaptic coupling, we have the following model
\begin{eqnarray}
v_i^\prime &=& I_{app}-g_{Ca} m_\infty(v_i)(v_i-v_{Ca})-g_K w_i(v_i-v_K)
\label{six Morris-Lecar model}
\\
&&\hspace{-.2in}-g_L(v_i-v_L)-\frac{g_{syn}}{N-1}\sum_{j=1, j\neq i}^{N} s(v_j(t-\tau))(v_i(t)-E_{syn}), \nonumber \\
w_i^\prime &=& \varphi \lambda(v_i)(w_\infty(v_i)-w_i),\nonumber \
\end{eqnarray}
where $i=1, \dots, N$ and
\begin{eqnarray*}
m_\infty(v)&=&\frac{1}{2} (1+\tanh((v-\nu_1)/\nu_2)),\\
w_\infty(v)&=&\frac{1}{2} (1+\tanh((v-\nu_3)/\nu_4)),\\
\lambda(v)&=&\cosh((v-\nu_3)/2\nu_4),\\
s(v)&=&\frac{1}{2}(1+\tanh(10v)).
\end{eqnarray*}
Using the parameter set I from \cite[Table 1]{CK12}, when there is no coupling in the network each oscillator has a unique exponentially asymptotically stable limit cycle with period $T\approx 23.87$ corresponding to $\Omega=0.2632$.

\begin{table}[ht]
\centering
\begin{tabular}{|c c c|}
\hline
Parameter & Name  & value \\ [0.5ex]
\hline
$v_{Ca}$      & Calcium equilibrium potential & 1\\
$v_K$          & Potassium equilibrium potential     & -0.7   \\
$v_L$     & Leak equilibrium potential      & -0.5   \\
$g_K$          & Potassium ionic conductance & 2 \\
$g_L$          & Leak ionic conductance   & 0.5 \\
$\varphi$      & Potassium rate constant  & $\frac{1}{3}$ \\
$\nu_1$          & Calcium activation potential    & -0.01   \\
$\nu_2$       & Calcium reciprocal slope & 0.15 \\
$\nu_3$     & Potassium activation potential     & 0.1   \\
$\nu_4$          & Potassium reciprocal slope & 0.145 \\
$g_{Ca}$            & Calcium potential conductance     & 1 \\
$I_{app}$       & Applied current      & 0.09 \\
[1ex]
\hline
\end{tabular}
\caption{Parameters used in system (\ref{six Morris-Lecar model}) \cite[Table 1]{CK12}}
\label{table parameters}
\end{table}

\subsection{Phase model analysis}

The calculation of the phase model interaction function, $H$, described
in section 2, may be carried out numerically.  We used the numerical
simulation package XPPAUT \cite{Ermentrout02} to do this
for model (\ref{six Morris-Lecar model}) with $\tau=0$, and to calculate a finite number of terms in the Fourier
series approximation for $H$. This gives an explicit approximation for
$H$:
\begin{equation}
H(\phi)\approx a_0+\sum_{k=1}^K (a_k\cos(k\phi)+b_k\sin(k\phi)).
\label{Happ}
\end{equation}
The first nine terms of Fourier coefficients are shown in Table \ref{table HFourier coeff}.
Figure \ref{figure Hplot} shows the plot of the interaction function (red solid), $H$, together with the approximations using one (black solid) and 20 terms (green dashed) of Fourier Series. Obviously, the one term approximation is not enough to explain the behavior of $H$. However, the 20-term approximation is indistinguishable with the numerically calculated $H$. Therefore, we adopt the 20-term approximation for subsequent calculations.
\begin{table}[ht]
\centering
\begin{tabular}{| c | c | c || c | c | c |}
\hline
$k$ & $a_k$  & $b_k$  & $k$ & $a_k$  & $b_k$ \\ [0.5ex]
\hline
0 & -2.0214064 & 0 &5 & -0.01054942& 0.010251001\\
1 & 1.994447 &-0.93897837 &6 & -0.002131111& 0.0046384884\\
2 & 0.010604496& 0.27575842& 7&9.9814584e-05 & 0.0013808256\\
3 &-0.051657807 & 0.042355601&8 &0.00015646126 &7.391713e-05 \\
4 & -0.029127343&0.01801952 & 9& -8.1846403e-05& -0.00024995379\\
[1ex]
\hline
\end{tabular}
\caption{Fourier coefficients of the interaction function for model (\ref{six Morris-Lecar model}).}
\label{table HFourier coeff}
\end{table}

\begin{figure}[!htbp]
\centering
\includegraphics[scale=0.3]{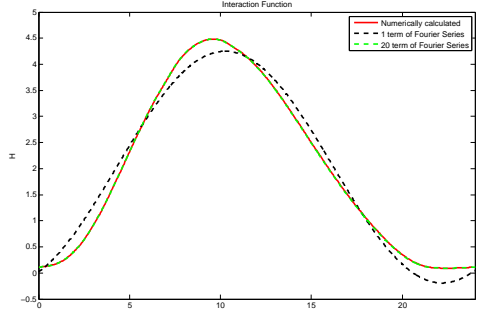}
\caption{Interaction function for model (\ref{six Morris-Lecar model}) and the approximations using 1 and 20 terms of Fourier Series}\label{figure Hplot}
\end{figure}

With the explicit approximation for $H$ \eqref{Happ} and the value of
the coefficients $a_j,b_j$, we can determine the asymptotic stability of any possible symmetric cluster states for any $N$ using the eigenvalues (\ref{circulant PD J eigenvalues}) calculated in the last section.
In this section, we consider two coupling matrices
\begin{eqnarray}\label{ML define W1 W2}
W_1 &=& circ(0, 1, \frac{1}{2}, \frac{1}{3}, \cdots, \frac{1}{2}, 1), \ \text{bi-directional, distance dependent }\\
W_2 &=& circ(0, 1, 1, \cdots, 1), \ \text{global homogeneous}.
\end{eqnarray}

With the coupling matrices $W_1$ and $W_2$, various values of $\epsilon$ and the 
time delay $\tau$, we used our phase model results above to predict the stability of all 
possible symmetric cluster solutions for $N=2, \cdots, 10$. The results
are shown in Tables \ref{table W1} \ref{table W2}. 
\begin{sidewaystable}
\centering
\resizebox{1\textwidth}{!}{
\begin{tabular}{|c|c|c|c|c|c|}
\hline
\multirow{2}{*}{N} & \multirow{2}{*}{$n$} & \multirow{2}{*}{$\psi$} &\multirow{2}{*}{Phase model prediction}
& \multicolumn{2}{|c|}{Full model} \\
\cline{5-6}
&&&&  $\epsilon = 0.01$ & $\epsilon = 0.05$ \\
\hline
\multirow{3}{*}{4} & 1 & 0 & $(0, 1.53)\cup(14.28, 23.87)$
& --
& -- \\

& 2 & $\pi$ & $(2.47, 10.46)$
& $(2.20, 10.21)$
& $(1.68, 9.32)\cup (17.47, 23.87)$
 \\

& 4 & $\frac{\pi}{2}, \frac{3\pi}{2}$ &(0.57,3.22)$\cup$(8.69,14.69) 
&  (0,2.96)$\cup$(8.36,14.16)& (0,2.26)$\cup$(6.86,12.36) \\
\hline

\multirow{3}{*}{5} & 1 & 0 & $(0, 1.53)\cup(14.28, 23.87)$  & -- & --  \\
& 5 & $\frac{2\pi}{5},\frac{8\pi}{5}$ &(1.26,2.48)$\cup$(10.84,13.46)
 & (0,2.21)$\cup$(10.31,12.71) & (0,1.51)$\cup$(8.71,10.81) \\

& 5 & $\frac{4\pi}{5},\frac{6\pi}{5}$ & (1.66,3.66)$\cup$(4.26,13.09)& (1.513,12.61) & (0.70,11.49) \\

\hline

\multirow{3}{*}{6} & 1 & 0 & $(0, 1.53)\cup(14.28, 23.87)$ 
& $(0, 1.41)\cup (12.31, 23.87)$
& $(0, 1.70)\cup (7.82, 23.87)$
\\

& 2 & $\pi$ 
& $(2.64, 9.45)$ 
& $(2.30, 9.10)$
& $(1.58, 7.79)\cup (16.59, 27.31)$
\\

& 3 & $\frac{2\pi}{3}$, $\frac{4\pi}{3}$ 
& $(0.41, 13.31)$
& $(0.41, 12.91)$
& $(0, 4.19) \cup (5.30, 11.40) \cup (17.41, 20.9) \cup (22.31, 23.87)$

\\

& 6 & $\frac{\pi}{3},\frac{5\pi}{3}$&(0.58,0.87)$\cup$(12.32,14.10) 
& (0,1.51)$\cup$(12.01,13.11) & (0,1.11)$\cup$(9.21,10.31) \\
\hline

\multirow{4}{*}{7} & 1 & 0 & $(0, 1.53)\cup(14.28, 23.87)$ 
& $(0, 1.49) \cup (12.19, 23.87)$
& $(0, 1.68) \cup (7.52, 23.87)$ \\

& 7 & $\frac{2\pi}{7},\frac{12\pi}{7}$ & (12.82,13.86)  & (0,1.21)$\cup$(12.11,12.81) & (0,1.10)$\cup$(8.82,9.82) \\

& 7 & $\frac{4\pi}{7},\frac{10\pi}{7}$&(2.33,4.37)$\cup$(7.59,13.83) & (0.51,3.91)$\cup$(7.21,13.11) & (0,2.71)$\cup$(5.81,11.11)\\
& 7 &$\frac{6\pi}{7},\frac{8\pi}{7}$&(2.51,3.45)$\cup$(4.04,4.93)$\cup$(5.48,5.96)$\cup$(7.47,13.13) & (2.51,4.91)$\cup$(6.91,12.11) & (1.70,3.81)$\cup$(5.70,10.82) \\

\hline
\multirow{5}{*}{8} & 1 & 0 & $(0, 1.53)\cup(14.28, 23.87)$ 
& $(0, 1.44) \cup (12.04, 23.87)$
& $(0, 1.74) \cup (7.27, 23.87)$ \\

& 2 & $\pi$ & $(2.63, 9.53)$ 
& $(2.25, 9.05)$
& $(1.55, 7.45) \cup (15.73, 23.87)$
  \\

& 4 & $\frac{\pi}{2}, \frac{3\pi}{2}$ & $(1.71, 3.22)\cup(8.69, 14.57)$ 
& $ (0.31, 2.81)\cup(8.11, 13.71)$
& $ (0, 1.80)\cup(6.21, 11.20)$
 \\

& 8 & $\frac{\pi}{4},\frac{7\pi}{4}$ & (13.34,13.95) & (0,1.01)$\cup$(12.31,12.71) & (0,1.00)$\cup$(8.52,9.42) \\
& 8 & $\frac{3\pi}{4},\frac{5\pi}{4}$ & (3.96,13.13)& (3.41,12.41) & (0.11,0.71)$\cup$(2.61,10.82)\\

\hline

\multirow{5}{*}{9} & 1 & 0 & $(0, 1.53)\cup(14.28, 23.87)$
& $(0, 1.66) \cup (11.93, 23.87)$
& $(0, 1.73) \cup (7.06, 23.87)$
\\

& 3 & $\frac{2\pi}{3}$, $\frac{4\pi}{3}$ & $(0.41, 5.04)\cup(8.08, 12.93)$ 
& $(0.41, 4.61)\cup (7.71, 12.41)$
& $(0, 3.30)\cup (5.80, 10.60)\cup (16.61, 19.31)$ \\

& 9 & $\frac{2\pi}{9},\frac{16\pi}{9}$ & (13.46,14.01) & --- & ---  \\
& 9 & $\frac{4\pi}{9},\frac{14\pi}{9}$ &(2.50,2.57)$\cup$(9.81,13.94) & (0.41,2.61)$\cup$(9.11,13.01) & (0,1.71)$\cup$(6.51,10.41)  \\
& 9 & $\frac{8\pi}{9},\frac{10\pi}{9}$ &(2.90,3.77)$\cup$(8.08,11.38)  &
(2.61, 4.01)$\cup$(7.51,11.01) & (1.60,3.12)$\cup$(5.92,8.81)\\
\hline
\end{tabular}
}
\caption{Comparison of phase model prediction of $\tau$-intervals of asymptotic stability for $n$-cluster solution with numerical of the full model. The coupling matrix is $W_1$. Other parameter values are given in Table \ref{table parameters}.}
\label{table W1}
\end{sidewaystable}

\begin{sidewaystable}\footnotesize
\centering
\resizebox{1\textwidth}{!}{
\begin{tabular}{|c|c|c|c|c|}
\hline
\multirow{2}{*}{N} & \multirow{2}{*}{n} &  \multirow{2}{*}{Phase model prediction}
& \multicolumn{2}{|c|}{Full model} \\
\cline{4-5}
&&& $\epsilon = 0.01$ & $\epsilon = 0.05$ \\
\hline
\multirow{2}{*}{2} & 1 & $(0, 1.53)\cup(14.28, 23.87)$ 
& $(0, 1.46)\cup (13.56, 23.87)$
& $(0, 1.43)\cup (11.53, 23.87)$
 \\

& 2 & (2.35,13.46)  & (2.23,13.43) & (1.92,13.32) \\

\hline

\multirow{2}{*}{3} & 1 & $(0, 1.53)\cup(14.28, 23.87)$ 
& $(0, 1.48)\cup(13.09, 23.87)$
& $(0, 1.52)\cup(9.53, 23.87)$
 \\

& 3 & (0.41,13.74)  & (0.50,13.40) & (0,12.6) \\

\hline
\multirow{3}{*}{4} & 1 & $(0, 1.53)\cup(14.28, 23.87)$ 
& $(0, 1.47)\cup (12.57, 23.87)$
& $(0, 1.70)\cup (8.11, 23.87)$
\\

&  2 & $(2.73, 9.19)$ 
& $(2.41, 8.91)$
& $(1.71, 7.71)\cup (17.53, 23.87)$
\\

& 4 & (1.93,3.22) $\cup$ (8.69,14.47)  & (0.97,2.87)$\cup$(8.47,13.97)& (0,1.96)$\cup$(6.97,12.27) \\

\hline

\multirow{2}{*}{5} & 1 & $(0, 1.53)\cup(14.28, 23.87)$ 
& $(0, 1.49) \cup (11.99, 23.87)$
& $(0, 1.79) \cup (7.22, 23.87)$
\\

& 5 & (1.57,2.69)$\cup$(9.76,13.20) & (0.93,2.23)$\cup$(9.13,12.43)
& (0,1.32)$\cup$(6.13,10.42)\\
\hline

\multirow{4}{*}{6} & 1 & $(0, 1.53)\cup(14.28, 23.87)$
& $ (0, 1.46) \cup (11.56, 23.87)$ 

&  \\

& 2 & $(2.73, 9.19)$ 
& $(2.30, 8.51)$
& $(1.48, 6.29) \cup (15.0, 23.19)$
\\

& 3 & $(0.41, 4.83)\cup (8.29, 12.79)$
&  $(0.28, 4.18) \cup (7.98, 11.98)$
&  $(0, 3.03) \cup (5.54, 9.83)\cup (15.94, 18.44)$  \\

& 6 & (12.26,13.86) & (11.96,12.72) & (0,0.91)$\cup$(9.21,9.91)\\

\hline
\multirow{2}{*}{7} & 1 & $(0, 1.53)\cup(14.28, 23.87)$
& $ (0, 1.49) \cup (11.01, 23.87)$
& $ (0, 1.94) \cup (6.10, 23.87)$ \\

& 7 & (12.47,13.54) & (11.92,12.32) & (0,0.92)$\cup$(8.52,9.32)\\
\hline

\multirow{4}{*}{8} & 1 & $(0, 1.53)\cup(14.28, 23.87)$
& $(0, 1.50)\cup (10.70, 23.87)$
& $(0, 2.00)\cup (5.70, 23.87)$ \\

& 2 & $(2.73, 9.19)$
& $ (2.22, 8.22) $
& $ (1.34, 5.44) \cup (13.24, 20.34)$ \\

& 4 & $(1.94, 3.22)\cup(8.69, 9.35)\cup(12.37, 14.47)$
& $ (0.53, 2.63) \cup (7.53, 8.43) \cup (11.13, 13.03) \cup (22.63, 23.23)$
& $ (0, 1.33) \cup (5.13, 6.03) \cup (7.33, 9.14) \cup (19.04, 20.34) \cup (20.94, 21.64)$ \\

& 8 & All unstable & (0,1.35)$\cup$(6.15,6.95)$\cup$(16.36,17.56) & (0,0.95)$\cup$(7.95,8.75)\\

\hline
\multirow{3}{*}{9} & 1 & $(0, 1.53)\cup(14.28, 23.87)$
& $(0, 1.48) \cup (10.22, 23.87)$
& $(0, 2.08) \cup (5.40, 23.87)$\\

& 3 & $(0.41, 4.83)\cup (8.29, 12.79)$
& $ (0.19,4.00) \cup (7.49, 11.29) \cup (21.39, 23.87)$
& $ (0, 2.78) \cup (5.00, 8.29) \cup (14.39, 15.80) \cup (17.50, 22.40)$\\

& 9 & (13.30,13.65) & (0.56,1.16)$\cup$(11.36,11.96) & (0,0.96)$\cup$(7.56,8.37)\\

\hline
\end{tabular}
}
\caption{Comparison of phase model prediction of $\tau$-intervals of asymptotic stability for $n$-cluster solution with numerical of the full model. The coupling matrix is $W_2$. Other parameter values are given in Table \ref{table parameters}.}
\label{table W2}
\end{sidewaystable}

\subsection{Numerical studies}

Numerical continuation studies of the full model (\ref{six Morris-Lecar model}) were carried out in DDE-BIFTOOL \cite{DDE-BIF} in MATLAB. This package allows one to compute branches of 
periodic orbits and their stability as a parameter is varied. Using the delay as a continuation
parameter, we used this package to compute the stability of all possible symmetric cluster 
solutions for $N=2, 3, \cdots, 10$ with the two different coupling matrices $W_1$, $W_2$ and 
four different values of $\epsilon$, $\epsilon = 0.001, 0.01, 0.05, 0.1$.  These results 
indicated that the phase model prediction is accurate up to $\epsilon = 0.01$.  
The results for $\epsilon = 0.01, 0.05$ are shown in Tables {\ref{table W1}} and 
\ref{table W2}. 

Using dde23 in MATLAB, we are able to numerically simulate the solution for larger sizes of networks. In the following, we show several numerical simulations that verify the predictions of the phase model for the case of a network with $N=140$ oscillators. This network admits 1-cluster, 2-cluster, 5-cluster, 7-cluster, 10-cluster, 14-cluster, 35-cluster, 70-cluster, and 140-cluster solutions.
From the phase model analysis, we are able to predict the stability regions for all the cluster states. Table \ref{table N140 phase model} summarize the stability intervals with respect to $\tau$ for the first five cluster types. 
\begin{table}
\centering

\begin{tabular}{|c|c|c|c|}
\hline
\multirow{2}{*}{n} & \multirow{2}{*}{$\psi$}
& \multicolumn{2}{|c|}{Stability w.r.t. $\tau$} \\
\cline{3-4}
&& $W_1$ & $W_2$ \\
\hline
1 & $0$ & (0, 1.52) $\cup$ (14.28,23.87) & (0, 1.52) $\cup$ (14.28,23.87)\\
\hline
2& $\pi$ & (2.73, 9.19) & (2.73, 9.19)\\
\hline
\multirow{2}{*}{5} & $\frac{2\pi}{5},\frac{8\pi}{5}$ & (1.52, 2.61) $\cup$ (10.78, 12.55) & \\
& $\frac{4\pi}{5},\frac{6\pi}{5}$ & (1.61, 2.81) $\cup$ (6.21, 7.77) $\cup$ (10.03, 12.55)& (1.57, 2.69) $\cup$ (10.03, 12.54) \\
\hline
\multirow{3}{*}{7} & $\frac{2\pi}{7},\frac{12\pi}{7}$ & (12.77, 13.29) & \\
&$\frac{4\pi}{7},\frac{10\pi}{7}$ & (8.13, 9.81 ) $\cup$ (11.12, 13.28) & (12.47, 13.28)\\
&$\frac{6\pi}{7},\frac{8\pi}{7}$ & (8.45, 9.88) $\cup$ (11.11, 13.13) &\\
\hline
\multirow{2}{*}{10}  & $\frac{\pi}{5},\frac{9\pi}{5}$ & All unstable & \\
& $\frac{3\pi}{5},\frac{7\pi}{5}$ & (7.85, 7.86) $\cup$ (11.80, 12.62) & All unstable\\
\hline
\end{tabular}

\caption{Phase model prediction of intervals of $\tau$ where stable 1-, 2-, 5-, 7-, and 10-cluster solutions exist. The network has 140 oscillators and the coupling matrix $W_1$ or $W_2$.}
\label{table N140 phase model}
\end{table}

The phase model predicts that, for bidirectional coupling, there should be four stable 5-cluster solutions when $\tau = 12$ corresponding to $\psi = \frac{k\pi}{5}$, $k=1,2, 3, 4$. 
In these 5-cluster solutions, the clusters are the same and given by
\begin{eqnarray*}
C_1= \{1, 6, 11, \dots, 136 \},\\
C_2= \{2, 7, 12, \dots, 137 \},\\
\vdots\\
C_5= \{5, 10, 15, \dots, 140 \}.
\end{eqnarray*}
but each solution has a different cluster ordering.  The ordering is
$C_1- C_2-C_3- C_4- C_5 $ with $\psi=2\pi/5$ (see Figure \ref{figure 5C orders} (a)),
$C_1- C_4- C_2-C_5- C_3$ with $\psi=4\pi/5$ (see Figure \ref{figure 5C orders} (b)),
$C_1- C_3- C_5- C_2- C_4 $ with $\psi=6\pi/5$(see Figure \ref{figure 5C orders} (c))
and
$C_1-C_5- C_4- C_3- C_2 $ with $\psi=8\pi/5$ (see Figure \ref{figure 5C orders} (d)).
Note that in Figure \ref{figure 5C orders} we reorder the indices so that 
oscillators that belong to the same cluster are plotted together.
\begin{figure}[!htbp]
\subfigure[$\psi=2\pi/5$]{\includegraphics[width=2.5in]{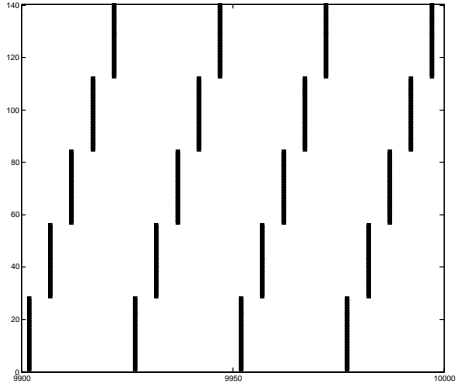}}
\subfigure[$\psi=4\pi/5$]{\includegraphics[width=2.5in]{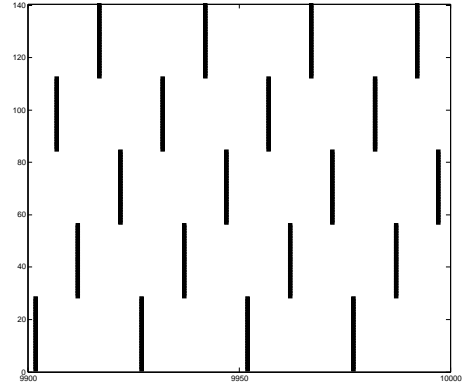}}
\subfigure[$\psi=6\pi/5$]{\includegraphics[width=2.5in]{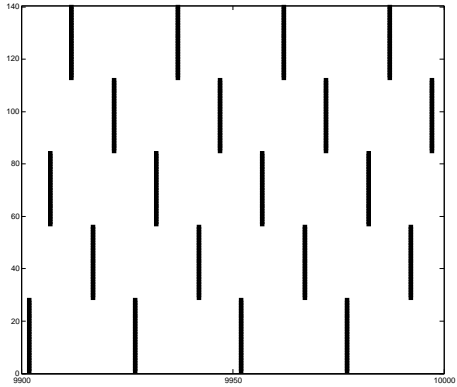}}
\subfigure[$\psi=8\pi/5$]{\includegraphics[width=2.5in]{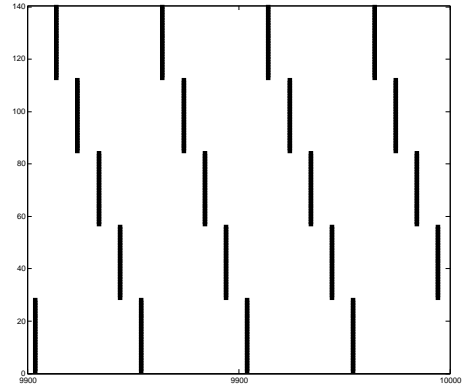}}
\caption{\small{\it Raster plots showing a stable 5-cluster solutions
in a network with $N=140$ neurons and bi-directional coupling
(connectivity matrix $W_1$). $\tau=12$ and $\epsilon=0.001$ all 
other parameters values are given in Table~\ref{table parameters}.
(a) $\psi=2\pi/5$, cluster ordering $C_1-C_2-C_3-C_4-C_5$
(b) $\psi=4\pi/5$, cluster ordering $C_1- C_4- C_2-C_5- C_3$
(c) $\psi=6\pi/5$, cluster ordering $C_1- C_3- C_5- C_2- C_4$
(d) $\psi=8\pi/5$, cluster ordering $C_1-C_5- C_4- C_3- C_2 $
}}
\label{figure 5C orders}
\end{figure}

Now consider the 7-cluster solution with connection matrix $W_1$. The phase model predicts that when $\tau = 13$ there exist six stable 7-cluster solutions with clusters:
\begin{eqnarray*}
C_1= \{1, 8, 15, \dots, 134 \},\\
C_2= \{2, 9, 16, \dots, 135 \},\\
\vdots \\
C_7= \{7, 14, 21, \dots, 140 \}.
\end{eqnarray*}

\begin{figure}[!htbp]\label{figure 7C orders}
\subfigure[$\psi=8\pi/7$]{\includegraphics[width=2.5in]{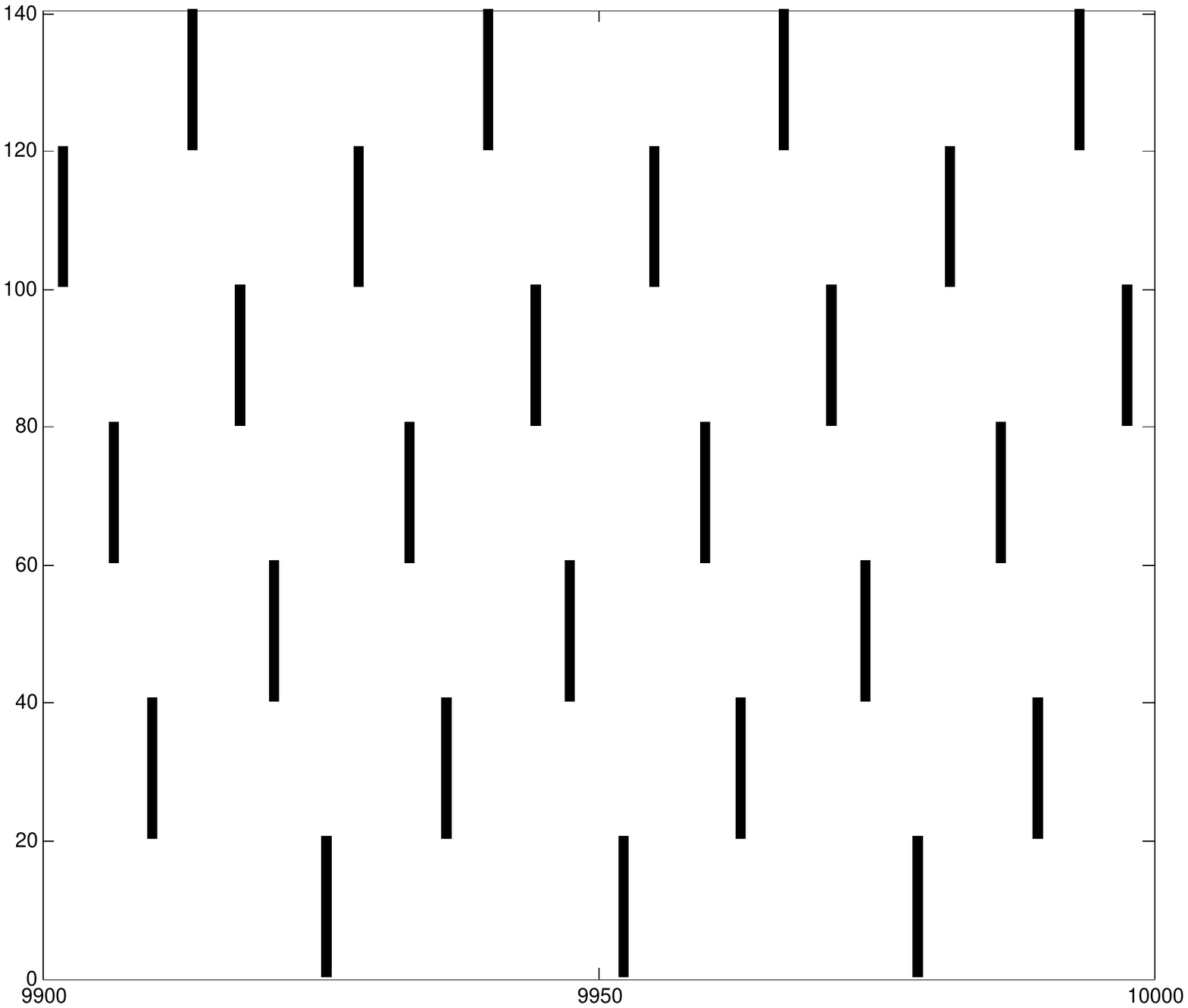}}
\subfigure[$\psi=6\pi/7$]{\includegraphics[width=2.5in]{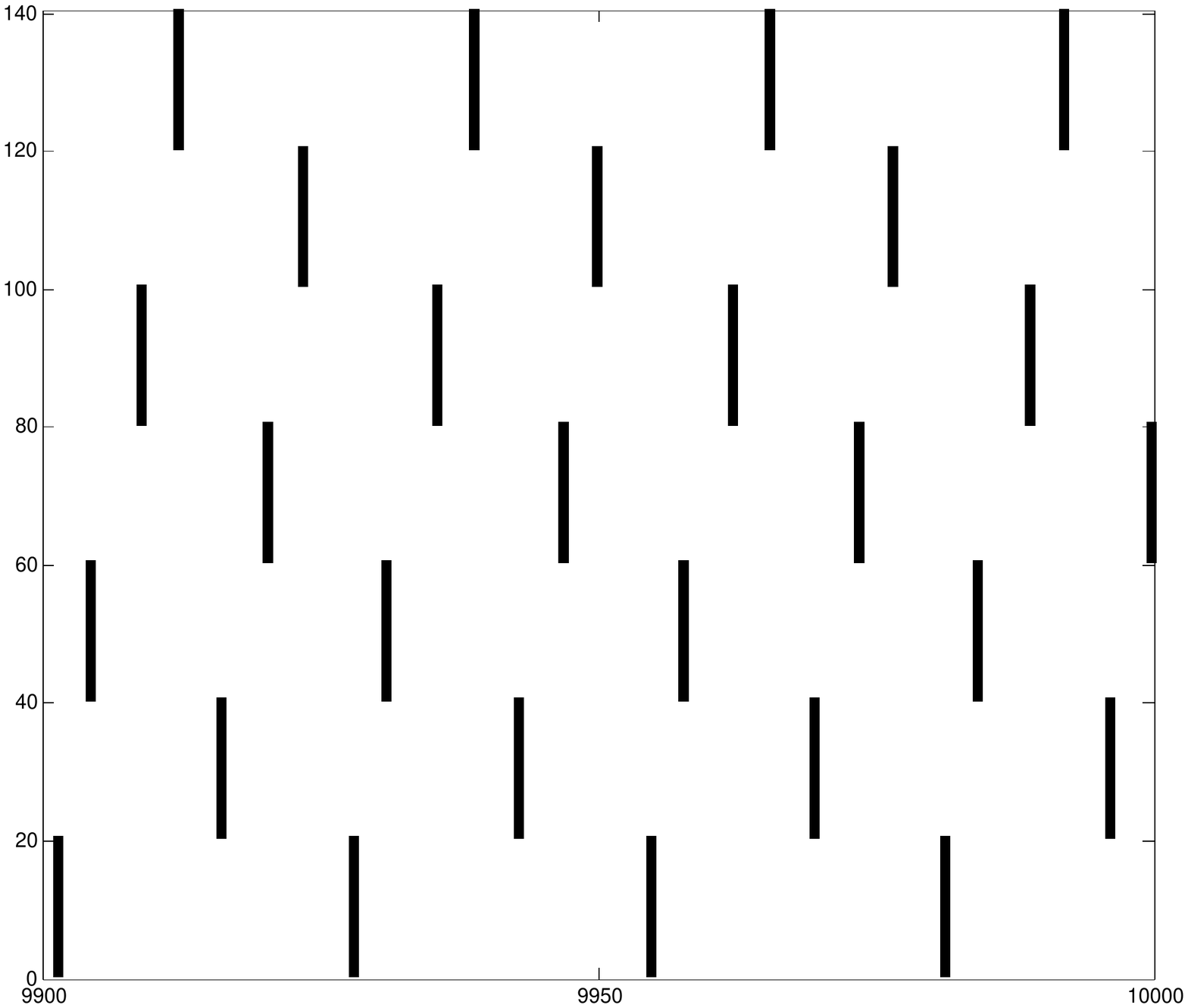}}
\caption{\small{\it Raster plots showing stable 7-cluster solutions
with $\tau=13$, $\epsilon=0.01$
in a network with $N=140$ neurons and bi-directional coupling (connectivity matrix $W_1$).
(a) $\psi= \frac{6\pi}{7}$, cluster ordering $C_1-C_6-C_4-C_7-C_5-C_3$.
(b) $\psi= \frac{8\pi}{7}$, cluster ordering $C_1-C_3-C_5-C_7-C_2-C_4-C_6$.
}}
\label{figure 7C orders}
\end{figure}

For $\psi = \frac{6\pi}{7}$, the cluster ordering is ${C_1-C_6- C_4-C_2- C_7- C_5- C_3}$
(see Figure \ref{figure 7C orders}(a)), while for $\psi = \frac{8\pi}{7}$, 
the cluster ordering is
${C_1-C_3- C_5- C_7- C_2-C_4-C_6}$ (see Figure \ref{figure 7C orders}(b)).  
In Figure \ref{figure 7C orders}, we reorder the oscillator indices so that oscillators that 
belong to the same cluster are plotted together. We were unable to find the other 
7-cluster solutions numerically.

\begin{remark}
We have observed other types of stable cluster solutions.
For example, Figure~\ref{figure AC2} shows solutions of the type \eqref{AC2 solution 1}
and \eqref{AC2 solution 2} which appear to be stable. With $N =8$ and bidirectional coupling in (\ref{ML define W1 W2}), the phase model predicts that the solutions of the type \eqref{AC2 solution 1} and \eqref{AC2 solution 2} are unstable for all $\tau$ when $\epsilon >0$, and stable for $\tau \in (1.5, 2.0] \cup (13.8,  14.1)$ when $\epsilon <0$. This prediction is consistent the 
numerically observed solution which occurs for $\epsilon = -0.01$, and $\tau = 2$.
\end{remark}

\begin{figure}[!htbp]
\subfigure[]{\includegraphics[width=3.0in]{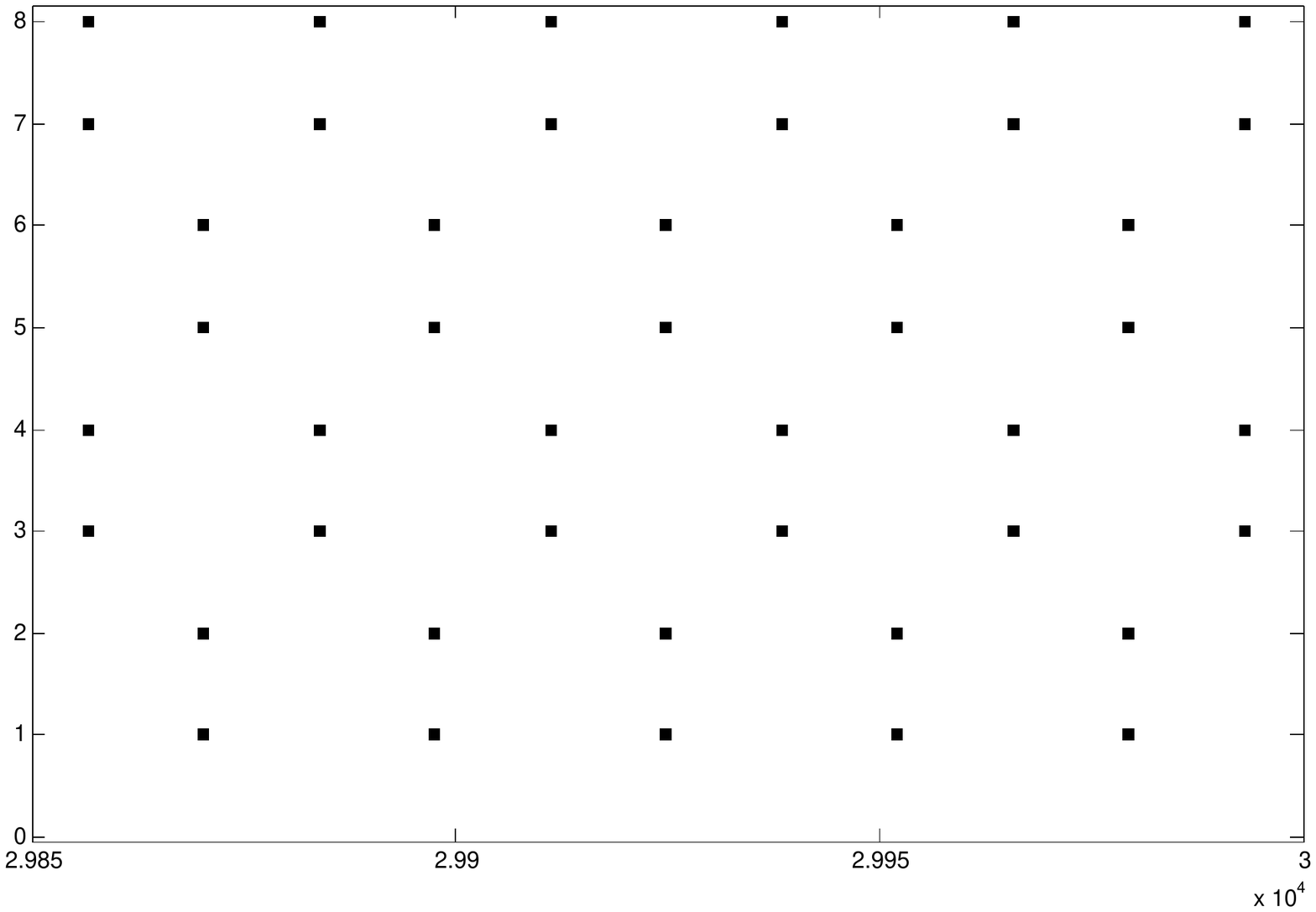}}
\subfigure[]{\includegraphics[width=3.0in]{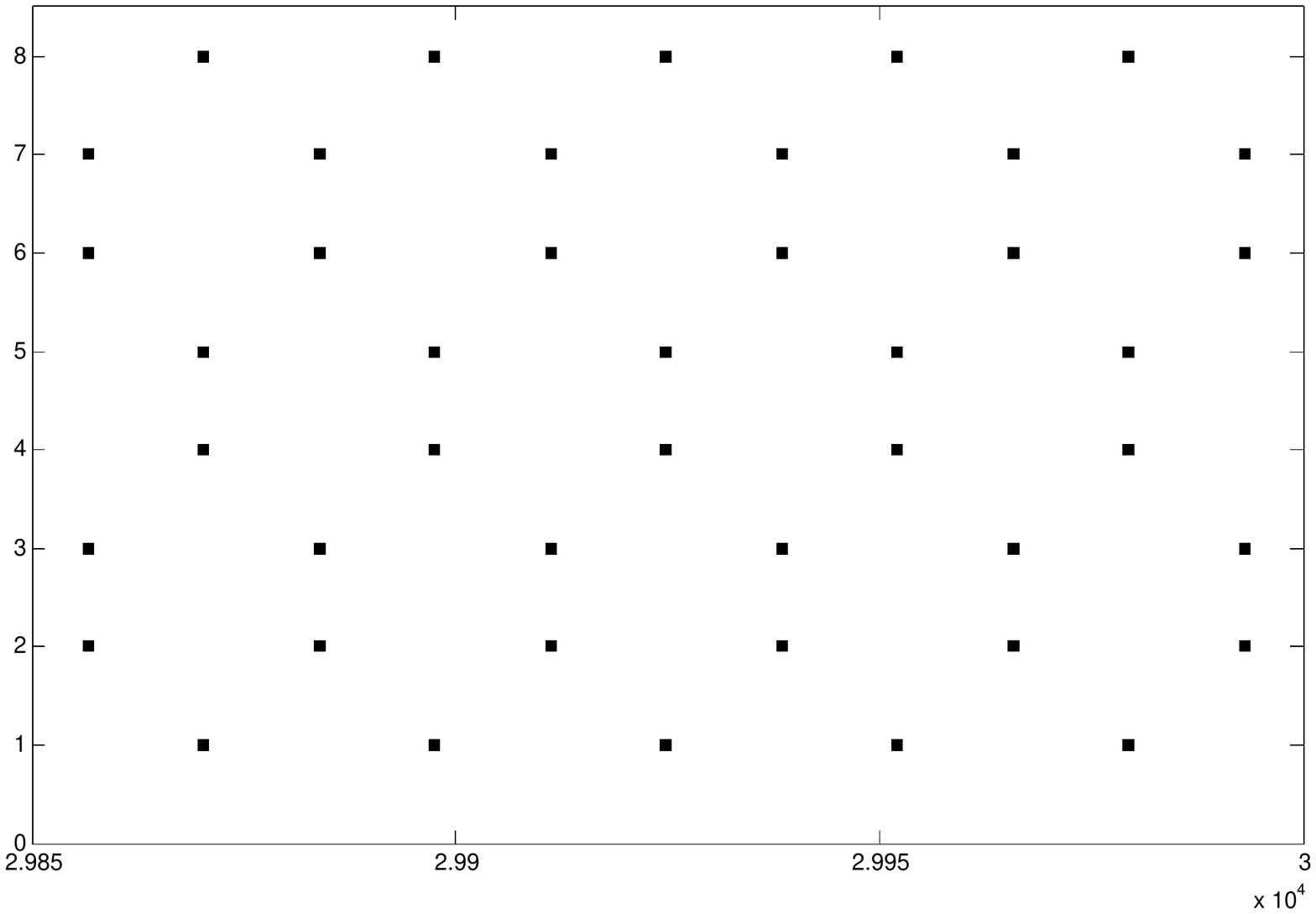}}
\caption{2-cluster solutions of the form (\ref{AC2 solution 1}) (a) and (\ref{AC2 solution 2}) (b) for $N=8$, $\epsilon=-0.01$, $\tau=2$ and connectivity matrix $W_1$. }\label{figure AC2}
\end{figure}

From Tables \ref{table W1} and \ref{table W2} it is clear that 
 the system exhibits multistability for a large of range of $\tau$ values. To further investigate the multistability, we carried out numerical simulations of the model (\ref{six Morris-Lecar model}) with $N=6$ and coupling matrix $W_1$ using XPPAUT \cite{Ermentrout02}. We start with a constant initial conditions ($v_i(t) = v_{i0}$, $w_i(t) = w_{i0}$, $-\tau \leq t \leq0$), and apply a small perturbation to the input current of one or more neurons during the simulation. The perturbations could cause switching between two different cluster types or between different realizations of the same cluster type. Figure \ref{figure 3C} show two examples, where the dark bars indicate when a particular neuron spikes. When $\tau =8$, both the 2-cluster solutions and 3-cluster solutions are stable. Figure \ref{figure 3C} (a) shows that when $\tau = 8$, a perturbation to neurons 1, 2, 3, 4 and 6 for $600 \leq t \leq 650$ switches the networks from a 3-cluster solution (with clusters (1, 4), (2, 5) and (3, 6)) to a 2-cluster solution (with clusters (1, 3, 5), and (2, 4, 6)). Figure \ref{figure 3C} (b) shows when $\tau = 8$, a perturbation to neuron 2, 4, 5, and 6 for $600 \leq t \leq 650$ switches the network from a 3-cluster solution with clusters  ordering (1, 4)-(3, 6)-(2, 5) to a 3-cluster solution with clusters ordering (1, 4)-(2, 5)-(3, 6).

\begin{figure}[!htbp]
\subfigure[$\tau=8$]{\includegraphics[scale=0.3]{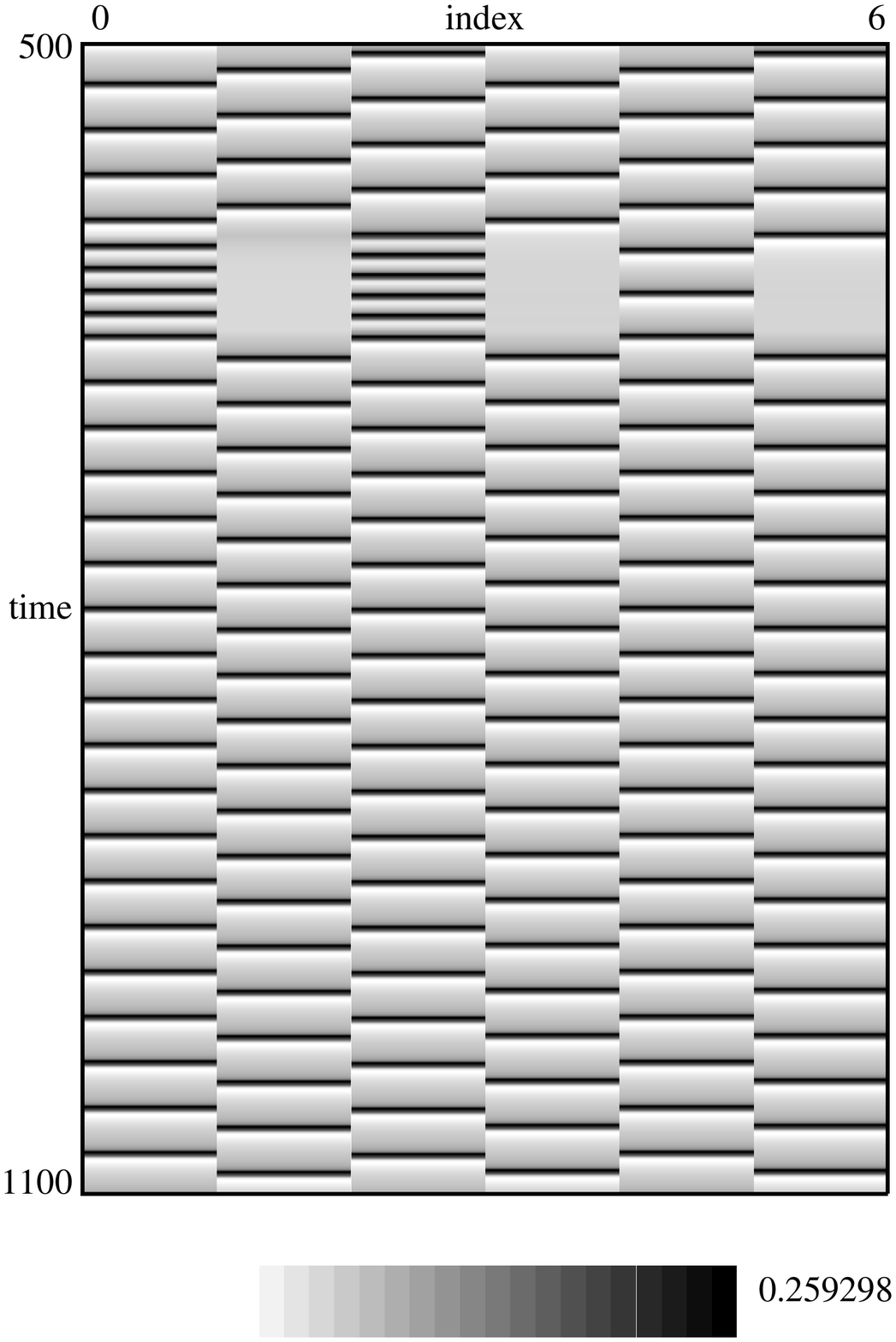}}
\subfigure[$\tau=8$]{\includegraphics[scale=0.3]{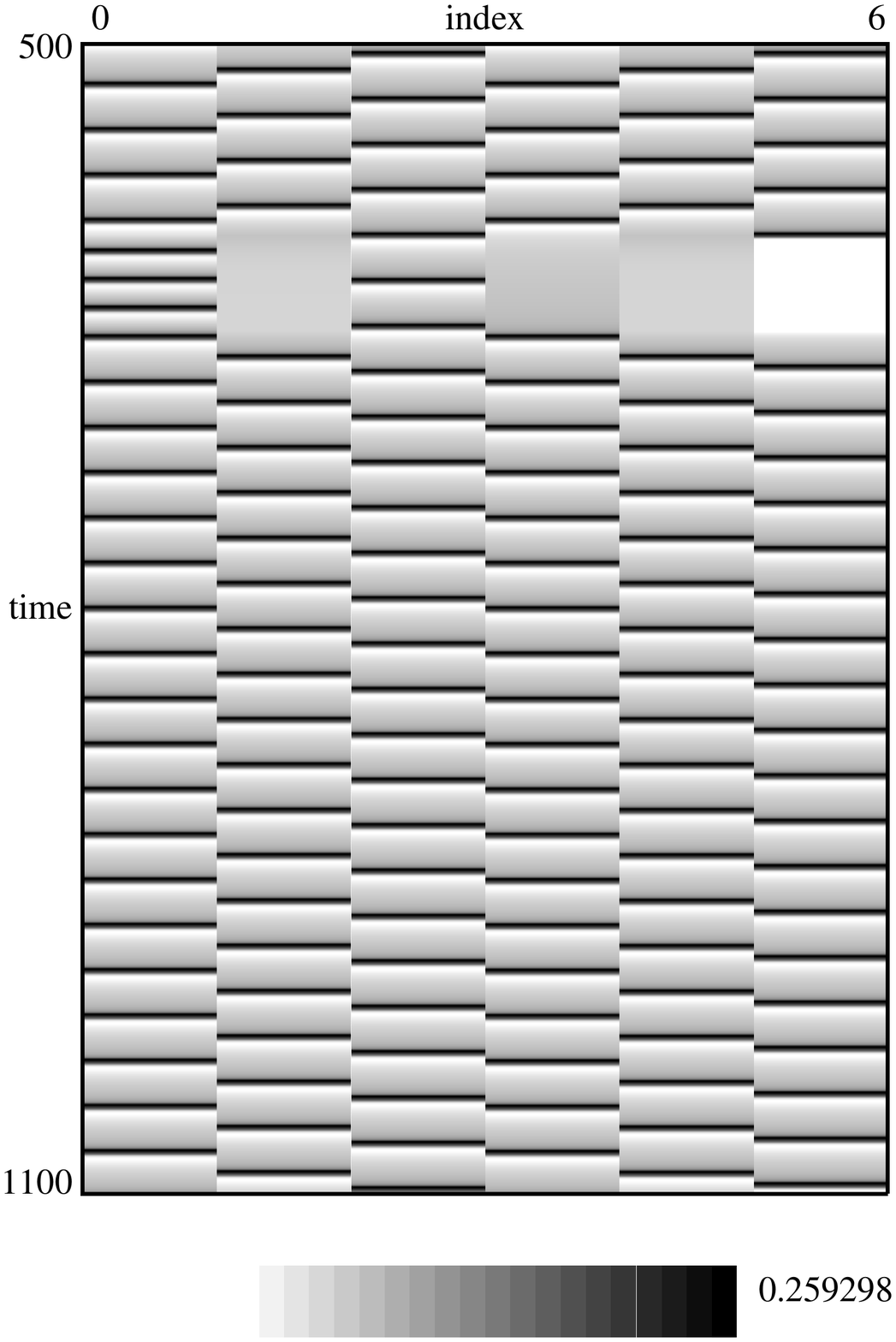}}
\caption{Numerical simulations showing multistability in a 6 neuron network
with bidirectional coupling (\ref{ML define W1 W2}). (a) Switching from a 3-cluster solution to a 2-cluster solution. (b) Switching from a 3-cluster solution to a 3-cluster solution.
$\tau = 8 $ and $\epsilon=0.001$. All other parameters are given in Table~\ref{table parameters}.
}
\label{figure 3C}
\end{figure}


\section{Persistence under symmetry breaking.}
\label{sec:pert}

By the weakly connected theory, the phase model analysis should persist under $\epsilon$-perturbation of the original model. From the steps of phase model reduction, we can see that if we perturb the connectivity matrix $W=(w_{ij})$ as $\tilde{W}= w_{ij}\,(1+\epsilon\, m_{ij})$,
the $\epsilon$-perturbation term will finally add to $\mathit{O}(\epsilon^2)$ term
in the phase model (\ref{DDE phase model 1}). A similar conclusion is obtained if we
perturb the time delay $\tau$ as $\tau_{ij} = \tau\, (1 + \epsilon\, \sigma_{ij})$.
Here $M = (m_{ij})$, and $S = (\sigma_{ij})$ are $N\times N$ matrices with elements which are $\mathit{O}(1)$ with respect to $\epsilon$. $\tau_{ij}$ represents transmission time from the $j$th oscillator to the $i$th oscillator.
Note that, after the perturbation, system (\ref{PM general network DDE}) no longer
possesses any symmetry. To $O(\epsilon)$ the symmetry persists, however. We thus
expect that, for $\epsilon$ sufficiently small, the analysis of
section~\ref{section phase difference analysis} should
still predict the behaviour of the system.

In order to investigate the effect of the $\epsilon$-perturbation on the connectivity matrix and time delay, we carried out sets of numerical simulations. For each set, we compare the original model with $W$ and $\tau$, to a model with $\tilde{W}$ and $\tau$, and a model with $W$ and $\tau_{ij}$. Take $N=6$, $W = circ\{0, 1, 1/2, 1/3, 1/2, 1 \}$, and $m_{ij}$, $\sigma_{ij}$ to be random numbers between 0 and 1. We simulate the original model and two perturbed models with $\tau = 1, \cdots, 15$, and $\epsilon = 0.001, 0.01, 0.05, 0.1$, respectively. From the simulation results, we see that for $\epsilon = 0.001, 0.01, 0.05$ the behavior of the perturbed models are the same as the unperturbed one for large time $t$. More accurately, the perturbed models take longer to settle at steady states than the original model. For $\epsilon = 0.1$, the behavior of unperturbed model almost captures the behavior of the perturbed ones. However, the system is sensitive to the $\tau$ values where steady states switch stability.  Therefore, we conclude that for a network with $6$ oscillators, the analysis of the original model is valid under perturbation with $\epsilon$ up to $0.05$. Furthermore, for a network with $N$ oscillators, the analysis of the system (\ref{PM general network DDE}) should persist under sufficiently small $\epsilon$-perturbation.

\begin{table}
\centering
\resizebox{1\textwidth}{!}{
\begin{tabular}{|c | c|c c c|c c c|c c c|c c c|}
\hline
\multirow{2}{*}{$\tau$ } & \multirow{2}{*}{PMP} & \multicolumn{3}{|c|}{$\epsilon = 0.001$} & \multicolumn{3}{|c|}{$\epsilon = 0.01$} & \multicolumn{3}{|c|}{$\epsilon = 0.05$}  & \multicolumn{3}{|c|}{$\epsilon = 0.1$} \\
\cline{3-14}
& & original & $\tilde{W}$ & $\tilde{\tau}$ & original & $\tilde{W}$ & $\tilde{\tau}$  & original & $\tilde{W}$ & $\tilde{\tau}$ & original & $\tilde{W}$ & $\tilde{\tau}$ \\
\hline
1 & 1C/3C  & 1C & 1C & 1C & NC & NC & NC & NC & NC & NC & 6C & NC & NC \\
2 & 3C  & 6C & 6C & 6C & 3C & 3C & 3C & 2C & 2C & 2C & 2C & 2C & 2C \\
3 & 2C/3C & 2C & 2C & 2C & 3C & 3C & 3C & 2C & 2C & 2C & 2C & 2C & 2C \\
\hline
4 & 2C/3C & 2C & 2C & 2C & 2C & 2C & 2C & 2C & 2C & 2C & 2C & 2C & 2C \\
5 & 2C/3C & 3C & 3C & 3C & 2C & 2C & 2C & 2C & 2C & 2C & 2C & 2C & 2C \\
6 & 2C/3C & 2C & 2C & 2C & 2C & 2C & 2C & 2C & 2C & 2C & 2C & 2C & NC \\
\hline
7 & 2C/3C & 2C & 2C & 2C & 2C & 2C & 2C & 2C & 2C & 2C & 1C & 1C & 1C  \\
8 & 2C/3C & 2C & 2C & 2C & 2C & 2C & 2C & 3C & 3C & NC & 1C & 1C & 1C  \\
9 & 2C/3C & 3C & 3C & 3C & 2C & 2C & 2C & NC & 1C & 1C & 1C & 1C & 1C   \\
\hline
10 & 3C & 3C & 3C & 3C & 1C & 1C & 1C & 1C & 1C & 1C & 1C & 1C & 1C\\
11 & 3C & NC & NC & NC & NC & NC & NC & 1C & 1C & 1C & 1C & 1C & 1C\\
12 & 3C & NC & NC & NC & NC & NC & NC & 1C & 1C & 1C & 1C & 1C & 1C\\
\hline
13 & 3C/6C & 6C & 6C & 6C & 1C & 1C & 1C & 1C & 1C & 1C & 1C & 1C & 1C\\
14 & 6C & 6C & 6C & 6C & 1C & 1C & 1C & 1C & 1C & 1C & 1C & 1C & 1C\\
15 & 1C & 1C & 1C & 1C & 1C & 1C & 1C & 1C & 1C & 1C & 1C & 2C & NC\\
\hline
\end{tabular}
}
\caption{Comparison of the original model and the two perturbed models for $\tau = 1, 2, \cdots, 15$ with $N=6$. The first column shows the stable cluster solutions predicted by the phase model for each $\tau$.}\label{table perturbation 3 models}
\end{table}

\section{Conclusions and future work}

In this paper, we studied a general system of identical oscillators with global
circulant, time-delayed coupling and showed that clustering behavior is a quite
prevalent
pattern of solution. We classified different clusters by the phase differences between neighboring oscillators, and investigated the existence and linear stability of
clustering solutions. We focussed on symmetric cluster solutions, where the
same number of oscillators belong to each cluster.  In particular, we showed that
certain symmetric cluster solutions exist for any type of oscillator
and any value of the delay -- their existence depends only on the presence of
circulant coupling.
We gave a complete analysis of the linear stability of these cluster
solutions. In the case of global bidirectional coupling and global homogeneous
coupling, more details about how the stability changes with parameters could be obtained using the symmetry.

Further exploration was done through numerical continuation and numerical
simulation studies of a specific example: circulantly coupled Morris-Lecar
oscillators. We considered both small ($N=6,8$) and large ($N=140$) networks
and two types of coupling: homogeneous and bi-directional, distance dependent.
As expected, the numerical studies agree with the theoretical predictions of the
phase model, so long as the strength of the coupling ($\epsilon$)
was sufficiently small.
For the parameters we explored this was $\epsilon\lesssim 0.05$.
In all cases we explored,
the $1-$cluster (synchronous) solution was the only asymptotically stable
solution when there was no delay in the system. For non-zero delay, this
solution could become unstable and other cluster solutions became stable.
We found ranges of the delay for which the system exhibits a high degree of
multistability. The multistability persisted even under in perturbations of
the coupling matrix ($W$), and time delay ($\tau$) which break the symmetry
of the model. The perturbed model agreed with the phase model prediction for
$\epsilon\lesssim 0.01$.

Delay-induced multistability has been observed in Hopfield neural networks
(e.g., \cite{ma2009,Zou2012}), in networks of spiking neurons
\cite{Ma2007,Foss1996,Foss1997}, and even in experimental
systems \cite{Foss2000}, where it has been postulated
as a potential mechanism for memory storage. The multistability we observe
has similar potential. It also provides the network with a simple way to
respond differently to different inputs, without changing synaptic weights.
Switching between solutions with a different number of clusters changes
the network average frequency, which could then change how the network affects
downstream neurons.

Multistability between different cluster solutions also has potential
connections with the concept of neural assemblies.  A neural assembly is
a group of neurons which transiently act together to achieve a particular
purpose \cite{dragoi2006,harris2003,pastalkova2008}. A network with multiple stable
cluster solutions provides a basic model for such behaviour. As the system switches
between different cluster solutions different neurons become synchronized
with each other. As we have shown, it possible for network to possess
multiple stable solutions with the same number of clusters but with different
groupings of the neurons.

In the future, it would be interesting to pursue a variety of the directions suggested by our results.
The switching of stability of the cluster solutions as the delay is varied should be
associated with bifurcations in the model. In the case of system with two neurons
it has been shown that delay induced stability changes of the $1-$ and $2-$ cluster
solutions are associated with pitchfork and saddle-node bifurcations in the phase
model and sometimes involve
other phase-locked solutions \cite{CK12}.  It would be interesting to explore the
delay induced bifurcations that occur in our network model. Preliminary numerical
investigations of the phase model (not shown) indicate a quite complex bifurcation
structure.  It would also be interesting to compute the bifurcation structure of the
cluster solutions in the $(\tau, \epsilon)$ parameter plane to get a better
understanding of the limits of the validity of the phase model.

\bibliography{/home/sacampbe/tex/bibs/abbrev,refs}







\end{document}